\newtheorem{theorem}{Theorem}[section]
\newtheorem{lemma}[theorem]{Lemma}
\newtheorem{proposition}[theorem]{Proposition}
\newtheorem{definition}[theorem]{Definition}
\newtheorem{cor}[theorem]{Corollary}
\newtheorem{ex}[theorem]{Example}
\DeclareMathOperator{\Z}{\mathbb{Z}_2}
\DeclareMathOperator{\del}{\partial}
\let\eps=\varepsilon
\begin{document}

\title{Barcode of a pair of compact exact Lagrangians in a punctured exact two-dimensional symplectic manifold}
\author{Tangi Pasquer}
\date{}

\maketitle

\begin{abstract}
In this article, we modify the classical Floer complex $CF(L_0,L_1)$ of a pair of two compact exact Lagrangian submanifolds $L_0,L_1$ of an exact symplectic 2-manifold $M$ into a $\Z[T]$-complex $CF_h(L_0,L_1)$, whose differential keeps track of how many times a pseudo-holomorphic strip passes through a distinguished point $h\in M$. We show that this complex is invariant under Hamiltonian isotopy, and we prove that its barcode, if it exists, is the same as both barcodes $\mathcal{B}(CF(L_0,L_1;M))$ and $\mathcal{B}(CF(L_0,L_1;M\setminus \{h \}))$. This allows us to extend a conjecture of Viterbo, which states that for every Hamiltonian isotopy $\phi^1_H(L)$ in $D^* L$, the spectral norm $\gamma(L,\phi^1_H(L))$ remains bounded independently of $H$, to the case of $D^* L$ with a point removed.
\end{abstract}

\tableofcontents

\section{Introduction}

\subsection{The exact Floer complex and its variation}

Andreas Floer introduced his celebrated chain complex in \cite{floer}, in order to prove a conjecture of Arnol'd concerning the minimal number of transverse intersections of the zero section of a cotangent bundle and its image under a Hamiltonian diffeomorphism. Floer theory has now become a fundamental tool in symplectic geometry, and has given rise to many problems and conjectures.

For an exact symplectic manifold $(M,\mathrm{d}\lambda)$ and two transverse exact Lagrangian submanifolds $L_0, L_1 \subseteq M$, the mod 2 Floer complex is defined by
$$CF(L_0,L_1)=\Z \langle L_0 \pitchfork L_1 \rangle,$$
with differential
$$\del x = \sum_{y \in L_0 \pitchfork L_1} n(x,y)y,$$
where $n(x,y)$ is the cardinal of $\mathcal{M}(x,y)$ modulo 2, $\mathcal{M}(x,y)$ being the set of rigid holomorphic strips in $M$ joining $x$ to $y$ with boundary on $L_0$ and $L_1$. More details on these strips can be found in Subsection \ref{2.2}.

Floer proved that $\del^2=0$, so there is a well-defined \textbf{Floer homology}
$$HF_*(L_0,L_1)=\mathrm{Ker} \del / \mathrm{Im} \del.$$

Floer also proved that for a Hamiltonian diffeomorphism $\phi$ of $M$, there is an isomorphism 
$$HF_*(L_0,L_1) \simeq HF_*(L_0,\phi(L_1)),$$
and that if $M$ is an exact symplectic manifold, with $L_0$ an exact Lagrangian and $\phi \in \mathrm{Ham}(M)$ such that $L_0 \pitchfork \phi(L_0)$, we have an isomorphism
$$HF_*(L_0,\phi(L_0)) \simeq H_*(L_0;\Z)$$
where the right hand side is the singular homology of $L_0$ with $\Z$ coefficients. \\

We now introduce a modified version of this complex in the case when $M$ is a two-dimensional symplectic manifold. This complex is related to the bulk-deformed Floer complex as defined by Fukaya, Oh, Ohta and Ono in \cite{bulk}. Let
\begin{equation*}
    CF_h(L_0,L_1)=\bigoplus_{x\in L_0 \pitchfork L_1} \Z[T]\cdot x
\end{equation*}
be the complex whose differential is $\Z[T]$-linear and is defined on the generators by
\begin{equation*}
    \del_h x = \sum_{y\in L_0 \pitchfork L_1} N(x,y)y.
\end{equation*}

The coefficients of the differential are now polynomials: for $x,y\in L_0 \pitchfork L_1$, this polynomial $N(x,y)$ is defined by
\begin{equation*}
    N(x,y)=\sum_{S\in \mathcal{M}(x,y)} T^{(S,h)} \mod 2,
\end{equation*}
where the cardinal $(S,h)\coloneqq | S^{-1}(\{h\}) |$ coincides with the algebraic intersection number between the strip $S$ and the hole $h$ (see Subsection \ref{2.2}). To put in more concrete terms, we introduce a coefficient $T^N$ whenever the strip passes through the hole $N$ times.
The fact that $\del_h^2=0$ is then a consequence of the proof of $\del^2=0$ for Floer's complex, together with a topological consideration of intersection numbers of strips with the hole.

\subsection{The theory of barcodes}

In the meantime, chain complexes and homology became widely used in several domains, and a new tool called \textbf{barcode} was introduced by Carlsson, Zomorodian, Collins, and Guibas in \cite{barcodes} to give more information than the classical homology, in particular in the case of diffeomorphic manifolds with different shapes or domains with angles and boundary.

This barcode is an invariant of filtered chain complexes: if $(C,\del)$ is a chain complex that admits a filtration by subcomplexes $C_s$, that is $C_{s'} \subseteq C_s$ if $s<s'$ and
$$C = \bigcup_{s\in\mathbb{R}} C_s,$$
then one can use the persistent homology 
$$H(C_s,\del)= \mathrm{Ker}(\del \colon C_s \rightarrow C_s) / \mathrm{Im}(\del \colon C_s \rightarrow C_s)$$
as a more precise invariant than the classical homology $H(C,\del)$.

%Denote $\ell(x) \coloneqq \sup \{s\in \mathbb{R} \colon x \in C_s \}$.
A convenient way of representing the persistent homology is the barcode. This invariant will be properly defined in Subsection \ref{3.3}. Briefly, the number of semi-infinite bars at stage $s$ corresponds to the dimension of the homology $H(C_s,\del)$, and those bars can appear or disappear according to the choice of filtration level $s$.

\subsection{Barcodes in Floer theory}

These two mathematical tools, namely Floer homology and persistent homology, were recently combined by Polterovitch and Shelukhin, who have introduced barcodes in Floer theory in \cite{sympbarcodes1}; also see work by Usher and Zhang with \cite{sympbarcodes2}.

Here, the filtration of $CF(L_0,L_1)$ is induced by an \textbf{action function} $\ell$, which is defined on a generator $x\in L_0 \pitchfork L_1$ by 
$$\ell (x)=f_1(x) - f_0(x),$$
where $f_\beta$ is a primitive of $\lambda$ on $L_\beta$.
The fact that the strips have positive energy proves that $\del$ is action-increasing, so $\ell$ endows $CF(L_0,L_1)$ with a filtration by the subcomplexes $C_s = \{x\in CF(L_0,L_1) \colon \ell(x)>s \}$. It is then possible to talk about the barcode of the Floer complex, and some spectral invariants can be computed thanks to this barcode, such as the boundary depth or the spectral norm that go back to the work of Viterbo in \cite{vit_inv}. The reader can find a quick definition of those spectral invariants by Dimitroglou Rizell in the introduction of \cite{georgios_legend} or a more thorough development by Shelukhin in §§ 2.3, 2.4 of \cite{shelukhin18}. In short, the spectral norm of the pair $(L_0,L_1)$ can be defined here as the largest difference between the levels of two semi-infinite bars in the full barcode of $(CF(L_0,L_1),\del,\ell)$.

\subsection{A conjecture by Viterbo, and a generalization}

Viterbo has conjectured in \cite{viterbo} (Conjecture 1) that for any torus $(T^n,g)$ equipped with a Riemannian metric, and for any Hamiltonian perturbation $\phi_H^1(L)$ of $L$ inside its 1-codisk bundle $D_g^* L$, the spectral norm $\gamma(L,\phi_H^1(L))$ is bounded. Shelukhin had confirmed this conjecture in his articles \cite{shelukhin18} and \cite{shelukhin20} for certain classes of manifolds, and in particular he solved the case $n=1$. Guillermou and Vichery  have more recently proved this conjecture for homogenous spaces in \cite{guillermou22}, independently and at the same time as Viterbo in \cite{viterbo22}. 

Here, we investigate the possibility of extending the conjecture not only to codisk bundles, but to more exotic exact symplectic manifolds with boundary. Dimitroglou Rizell has proved in \cite{georgios_legend} that the conjecture neither holds for immersed Legendrians, nor for exact Lagrangians in some exact sympletic manifolds as simple as a punctured torus.

In this article, we extend the result by showing that, if an exact Lagrangian inside an exact two-dimensional symplectic manifold satisfies a bound on its spectral norm, then the same is true if one deforms the symplectic manifold by removing a point. This result relies on the following theorem, whose proof will be the main goal of this article.

\begin{theorem}\label{thm1}
Let $(M,\mathrm{d}\lambda)$ be an exact two-dimensional symplectic manifold, $L_0$ and $L_1$ two transverse compact exact Lagrangian sumbanifolds of $M$, $h\in M \setminus (L_0 \cup L_1)$ and $\phi$ a Hamiltonian diffeomorphism of $M \setminus \{h \}$ such that $\phi(L_1) \pitchfork L_0 \subseteq M \setminus \{h \}$. 

Suppose there is an action-preserving chain-isomorphism 
$$CF_h(L_0,L_1;M) \overset{\sim}{\longrightarrow} CF(L_0,L_1;M \setminus \{h \}) \underset{\Z}{\otimes} \Z[T].$$

Then, there is an action-preserving chain-isomorphism 
$$CF_h(L_0,\phi(L_1);M) \overset{\sim}{\longrightarrow} CF(L_0,\phi(L_1);M \setminus \{h \}) \underset{\Z}{\otimes} \Z[T].$$

In addition, it follows that $CF_h(L_0,\phi(L_1);M)$ has a well defined barcode, which moreover coincides with the barcodes of both regular Floer complexes $CF(L_0,\phi(L_1);M\setminus \{h \})$ and $CF(L_0,\phi(L_1);M)$.
\end{theorem}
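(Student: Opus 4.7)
The plan is to transport the given isomorphism along a Hamiltonian isotopy from $L_1$ to $\phi(L_1)$, carrying out a bifurcation analysis that keeps the left-hand complex $CF_h$ and the right-hand complex $CF(L_0,L_1; M \setminus \{h\}) \otimes_{\Z} \Z[T]$ (and their analogues for $\phi(L_1)$) in parallel at every step.

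First, I would fix a Hamiltonian $H_t \colon M \setminus \{h\} \to \mathbb{R}$ generating a path $\phi_t$ from the identity to $\phi$, and extend it to $\tilde H_t \colon M \to \mathbb{R}$ by a cutoff vanishing in a neighborhood of $h$. The resulting flow $\tilde\phi_t$ on $M$ agrees with $\phi_t$ on $M \setminus \{h\}$ and fixes a neighborhood of $h$. After a generic perturbation of $\tilde H_t$, one may assume $L_0 \pitchfork \tilde\phi_t(L_1)$ for every $t$ outside a finite collection of bifurcation times $0 < t_1 < \dots < t_k < 1$, at which either a birth-death of a pair of intersection points occurs or a broken strip appears.

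Second, between consecutive bifurcation times the intersection set $L_0 \pitchfork \tilde\phi_t(L_1)$ is a smooth family, so the generators of $CF_h(L_0, \tilde\phi_t(L_1); M)$ and of $CF(L_0, \tilde\phi_t(L_1); M \setminus \{h\}) \otimes_{\Z} \Z[T]$ are in bijection. I would invoke the invariance of each of the two complexes under Hamiltonian isotopy, already developed earlier in the paper, to build action-preserving $\Z[T]$-linear chain isomorphisms on each side of the hoped-for square. The crucial observation is that $\tilde\phi_t$ fixes a neighborhood of $h$, so the algebraic intersection number with $h$ of any parametric continuation strip is a homotopy invariant of its relative class. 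Consequently, the $T$-weighted modification of the differential on the $CF_h$ side matches, term by term, the $\Z[T]$-base change of the modification on the $CF(M \setminus \{h\})$ side, and the chain isomorphism produced on the left intertwines the one on the right through $\Psi$.

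Third, at each bifurcation time, either an acyclic summand generated by a newborn pair of intersections is added to (or removed from) the complex, or a handle slide rearranges the differential. By the same $T$-weight tracking, the change on $CF_h$ is the $\Z[T]$-base change of the change on $CF(M \setminus \{h\}) \otimes_{\Z} \Z[T]$, so the extension of $\Psi$ across the bifurcation is automatic. Composing all inter-bifurcation isomorphisms and bifurcation extensions with $\Psi$ yields the desired action-preserving chain isomorphism $\Psi'$ for the pair $(L_0, \phi(L_1))$. The barcode assertion follows formally: an action-preserving chain isomorphism identifies filtered barcodes, tensoring a $\Z$-complex with $\Z[T]$ leaves the multiset of bars unchanged, and the specialization of $\Psi'$ at $T = 1$ identifies those bars with the barcode of the honest Floer complex $CF(L_0, \phi(L_1); M)$.

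The main obstacle I anticipate is the compatibility claim of the second step: verifying that every parametric change in differential on the $CF_h$ side is precisely the $T$-weighted reflection of the parametric change on the $CF(M \setminus \{h\})$ side, with no extra $T$-dependent contributions appearing when a strip grazes the puncture. This requires a careful moduli-theoretic analysis of how strips can enter and exit a fixed neighborhood of $h$ along the isotopy, leveraging that the Lagrangians stay in $M \setminus \{h\}$ throughout and that in dimension two the relevant strips are classified up to homotopy by their topology together with their intersection with $h$.
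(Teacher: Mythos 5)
Your high-level strategy is essentially the paper's: run a Hamiltonian isotopy from $L_1$ to $\phi(L_1)$ supported in $M\setminus\{h\}$, decompose it into generic births/deaths and handle-slides, and transport the standardness of $CF_h$ through each bifurcation. But you treat as "already developed" exactly the part that the paper's proof is about, and you leave the central mechanism unverified. Concretely, the phrase "invoke the invariance of each of the two complexes under Hamiltonian isotopy, already developed earlier in the paper" is circular: the Hamiltonian invariance of the deformed complex $CF_h$ \emph{is} Theorem \ref{thm}, which is the main technical input into Theorem \ref{thm1} and not a prior standing result. You must actually prove that the family $D(t)=CF_h(L_0,L_t;M)$ forms a piecewise continuous filtered family in the sense of Definition \ref{pwc}, which requires showing that the map $H$ defined across a bifurcation is a genuine filtered chain isomorphism; this is the content of Propositions \ref{prop3}--\ref{lastprop}, none of which you address. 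The essential identity is that when a birth destroys an old strip from $a_i$ to $b_k$, the resulting broken configuration through $c$ and $d$ satisfies $(S_0,h)=(S_1,h)+(S_2,h)$, so the $T$-weights multiply; asserting this ``by the same $T$-weight tracking'' is not an argument.

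The most serious missing ingredient is Lemma \ref{lemma3}: at a birth the unique newborn strip between $c$ and $d$ must \emph{not} pass through the hole $h$, so that the new acyclic summand is $\Z[T]\langle c,d\rangle$ with $\partial c = d$, and not $\partial c = T^n d$ with $n>0$. Your third step simply declares that "an acyclic summand generated by a newborn pair of intersections is added," but over $\Z[T]$ the two-generator complex with $\partial c = T^n d$ has torsion homology $\Z[T]/(T^n)$, admits no Barannikov basis, and would destroy standardness immediately. The paper proves the needed vanishing $(S,h)=0$ by confining the birth to a small Darboux ball away from $h$ and using an area estimate; you gesture at this in your last paragraph (as an obstacle you "anticipate") but never resolve it. Finally, your concluding barcode claim needs the fact that the evaluation maps $\mathrm{ev}_0$ and $\mathrm{ev}_1$ carry a Barannikov basis of $CF_h$ to Barannikov bases of $CF(L_0,\phi(L_1);M\setminus\{h\})$ and $CF(L_0,\phi(L_1);M)$ respectively, which is Lemma \ref{ev}; "the specialization of $\Psi'$ at $T=1$" does not by itself identify barcodes without this compatibility.

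Two smaller remarks. Your cutoff extension of the Hamiltonian to all of $M$ is unnecessary and potentially problematic: if $H_t$ is not compactly supported away from $h$, the cutoff changes the flow on $M\setminus\{h\}$. The paper instead fixes a Hamiltonian isotopy supported inside $M'=M\setminus\{h\}$ from the start, which automatically extends by the identity near $h$. Also, the paper's proof of Proposition \ref{cor} works only with the single family $D(t)$ and shows that the \emph{property} of being standard (possessing a Barannikov basis) propagates, then invokes uniqueness of the underlying $k$-complex (Proposition 3.11) and Lemma \ref{ev} to identify it with $CF(L_0,L_t;M\setminus\{h\})$; this is cleaner than your plan to construct a parallel pair of isomorphisms intertwined by $\Psi$.
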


The assumptions of the above theorem are automatically satisfied when $L_1$ is a small Hamiltonian perturbation of $L_0$. Namely, in that case, all Floer strips can be assumed to be disjoint from the point $h$. The claimed generalization of Viterbo's conjecture is the following corollary:

\begin{cor}\label{cor1}
Let $(M,\mathrm{d}\lambda)$ be an exact two-dimensional symplectic manifold, and let $L_0 \subseteq M$ be a compact exact Lagrangian submanifold of $M$. Let $L_1$ be a small Hamiltonian perturbation of $L_0$ such that $L_0 \pitchfork L_1$. Suppose that for every Hamiltonian diffeomorphism $\phi$ of $M$ such that $L_0 \pitchfork \phi(L_1)$, the spectral norm $\gamma (L_0, \phi(L_1);M)$ in $M$ is bounded by a constant independent of $\phi$. 

Let $h\in M \setminus (L_0 \cup L_1)$ be chosen sufficiently far away from the support of the Hamiltonian isotopy that takes $L_0$ to $L_1$, and let $M' \coloneqq M \setminus \{h \}$. $(M',\mathrm{d}\lambda)$ is also an exact symplectic manifold and $L_0,L_1 \subseteq M'$ are two Hamiltonian isotopic exact Lagrangians of $M'$. 

Then, for every Hamiltonian diffeomorphism $\phi$ of $M'$ such that $L_0 \pitchfork \phi(L_1)$, the spectral norm $\gamma (L_0, \phi(L_1);M')$ in $M'$
is also bounded by the same constant as above.
\end{cor}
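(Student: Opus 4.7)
The plan is to deduce the corollary directly from Theorem \ref{thm1}, in three steps. Step 1 is to verify the starting hypothesis of Theorem \ref{thm1} for the pair $(L_0,L_1)$. Because $L_1$ is a small Hamiltonian perturbation of $L_0$ and $h$ is chosen outside the support of the connecting isotopy, every Floer strip $S\in\mathcal{M}(x,y)$ between two intersection points of $L_0\pitchfork L_1$ can be confined to a compact region disjoint from $h$, so $(S,h)=0$ and every coefficient $N(x,y)$ reduces to $n(x,y)\cdot 1$. This yields the required action-preserving chain isomorphism
$$CF_h(L_0,L_1;M)\;\overset{\sim}{\longrightarrow}\;CF(L_0,L_1;M')\underset{\Z}{\otimes}\Z[T].$$

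Step 2: fix any $\phi\in\mathrm{Ham}(M')$ with $L_0\pitchfork\phi(L_1)$. Theorem \ref{thm1} then provides the analogous action-preserving chain isomorphism for the pair $(L_0,\phi(L_1))$ and, crucially, asserts that the barcodes of $CF_h(L_0,\phi(L_1);M)$, $CF(L_0,\phi(L_1);M')$ and $CF(L_0,\phi(L_1);M)$ all coincide. Since the spectral norm can be read off the barcode as the maximal gap between semi-infinite bars, this yields
$$\gamma(L_0,\phi(L_1);M')\;=\;\gamma(L_0,\phi(L_1);M),$$
so it suffices to bound the right-hand side by the constant furnished by the hypothesis.

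Step 3 is the delicate one and constitutes the main obstacle: the hypothesis controls $\gamma(L_0,\psi(L_1);M)$ only for $\psi\in\mathrm{Ham}(M)$, whereas our $\phi$ is a priori only Hamiltonian on $M'$ and its generating Hamiltonian need not extend smoothly across $h$. I would bridge this gap not by extending $\phi$ itself but by producing a different element of $\mathrm{Ham}(M)$ with the same effect on $L_1$. Let $(\phi^t)_{t\in[0,1]}$ be a Hamiltonian isotopy in $M'$ with $\phi^1=\phi$ and set $L^t:=\phi^t(L_1)\subseteq M'\subseteq M$; each $L^t$ is a compact exact Lagrangian in $M$ (a primitive in $M'$ is also a primitive in $M$), and the union $K:=\bigcup_{t\in[0,1]}L^t$ is a compact subset of $M$ disjoint from $h$. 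Using Weinstein's neighborhood theorem together with the vanishing of flux for exact Lagrangian isotopies, one obtains a smooth time-dependent Hamiltonian in a neighborhood of $K$ whose flow sends $L_1$ to $L^t$; cutting it off by a bump function equal to $1$ on an open neighborhood of $K$ and vanishing near $h$ extends it to a compactly supported Hamiltonian on $M$, whose time-$1$ map $\psi\in\mathrm{Ham}(M)$ satisfies $\psi(L_1)=\phi(L_1)$. The hypothesis then gives $\gamma(L_0,\psi(L_1);M)\leq C$, and the chain $\gamma(L_0,\phi(L_1);M')=\gamma(L_0,\phi(L_1);M)=\gamma(L_0,\psi(L_1);M)\leq C$ completes the proof.
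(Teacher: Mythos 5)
Your proof is correct, and it deserves some credit for being more careful than the paper itself, which does not actually give a proof of the corollary: it only remarks before the statement that the hypothesis of Theorem~\ref{thm1} is satisfied for a small perturbation $L_1$, and then states the corollary as though it were immediate. Your Steps 1 and 2 spell out exactly that implicit argument.

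Step 3 is where you identify and repair a genuine gap that the paper glosses over. The hypothesis of the corollary quantifies only over $\phi\in\mathrm{Ham}(M)$, whereas Theorem~\ref{thm1} is invoked for $\phi\in\mathrm{Ham}(M')$; after Step 2 one has $\gamma(L_0,\phi(L_1);M')=\gamma(L_0,\phi(L_1);M)$, but one still needs to know that $\gamma(L_0,\phi(L_1);M)$ is controlled by the hypothesis, i.e.\ that $\phi(L_1)$ can be reached from $L_1$ by an element of $\mathrm{Ham}(M)$. Your construction of $\psi$ does exactly this and is essentially correct. However, it is over-engineered: there is no need to invoke the Weinstein neighbourhood theorem or flux vanishing, since a generating Hamiltonian $H_t$ for the isotopy $\phi^t$ on $M'$ is already given. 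It suffices to pick a compactly supported bump function $\chi$ on $M'$ equal to $1$ on a neighbourhood $U$ of the compact set $K=\bigcup_{t\in[0,1]}\phi^t(L_1)$, extend $\chi H_t$ by zero to $M$, and let $\psi^t$ be its flow; the standard open-closed argument (using $X_{\chi H_t}=X_{H_t}$ on $U\supseteq K$) shows $\psi^t(L_1)=\phi^t(L_1)$ for all $t$, so $\psi:=\psi^1\in\mathrm{Ham}(M)$ has $\psi(L_1)=\phi(L_1)$. This shortcut avoids re-deriving a Hamiltonian that you already have in hand, but both routes are valid, and the conclusion
\[
\gamma(L_0,\phi(L_1);M')=\gamma(L_0,\phi(L_1);M)=\gamma(L_0,\psi(L_1);M)\leq C
\]
goes through either way.
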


A concrete example of a symplectic surface $M$ where the above corollary can be applied is the following:
\begin{ex}
Let $M$ be the 1-codisk bundle of the circle: $M=D^* \mathbb{S}^1=\mathbb{S}^1 \times (-1,1)$. Let $L \coloneqq \mathbb{S}^1 \times \{0 \}$ be its zero section, and let $h\in M \setminus L$.
If $\phi$ is a Hamiltonian diffeomorphism of $M \setminus \{h \}$, then the spectral norm $\gamma(L, \phi(L);M \setminus \{h \})$ in $M \setminus \{h \}$ is bounded by the same constant as for Hamiltonian diffeomorphisms in $M$ found in Shelukhin's paper \cite{shelukhin18}.
\end{ex}

\section{Definitions and conventions}

\subsection{Exact symplectic manifolds, exact Lagrangians and Hamiltonian isotopies}

Let $(M,\omega=\mathrm{d}\lambda)$ be an exact symplectic manifold of dimension 2, and $L \subseteq M$ a compact exact Lagrangian submanifold of $M$, i.e $\lambda |_{TL}$ is an exact one-form. Let $h\in M\setminus L$ be any point, which will be referred as the \textit{hole}. Let  $M'\coloneqq M\setminus \{h\}$ and $\omega' \coloneqq \omega_{|\Lambda^2 TM'} $. Then, $(M',\omega')$ is an exact symplectic manifold as well, and $L$ is still an exact Lagrangian of $(M',\omega')$.

Finally, let $L_0$ be another compact exact Lagrangian of $M'$ such that $L \pitchfork L_0$. Let $L_1$ be a Lagrangian that is Hamiltonian isotopic to $L_0$ by a Hamiltonian isotopy supported inside $M'$, and such that $L \pitchfork L_1$. We can then choose a Hamiltonian isotopy $\phi_t \coloneqq \phi^t_H \colon [0,1] \times M' \rightarrow M'$ such that $L_t\coloneqq \phi_t(L_0)$ is transverse to $L$ for all times $t\in[0,1]$ except for a finite set $\{t_1,\dots,t_n \}$, with $0<t_1<\dots<t_n<1$. That is a well-know fact that such Hamiltonian isotopies always exist for any pair of Hamiltonian isotopic Lagrangians. For example, a more general statement can be found as Lemma 3.3 in \cite{floer}. The argument is that, in a neighborhood of a point $x\in L\cap L_t$, the Lagrangian $L_t$ can be identified with the graph $\mathrm{gr}(\mathrm{d}f) \subseteq T^* L$ of an exact one-form on $L$, and $f$ can be modified to be a Morse function.

Let us remind ourselves of the ODE that defines the Hamiltonian flow:
\begin{equation*}
    \del_t \phi_t=X_t \circ \phi_t.
\end{equation*}
The vector field $X_t$ here satisfies 
\begin{equation*}
    \iota _{X_t} \omega = \omega (X_t, -) = \mathrm{d}H_t,
\end{equation*}
where $H\colon [0,1] \times M' \rightarrow \mathbb{R}$ is a smooth function called \textbf{Hamiltonian}.

%Denote $i \colon L \hookrightarrow M'$ the inclusion of $L$ and $i_t=\phi_t \circ i_0 \colon L_0 \hookrightarrow M'$ the inclusion of $L_t, \ t \in [0,1]$ in $M'$.

The Lagrangian $L$ is exact, so we can choose a primitive $f$ of $\lambda |_{TL}$. For the family $L_t, \ t\in[0,1]$, it follows from Cartan's formula 
$$\frac{\mathrm{d}}{\mathrm{d}t} (\phi_t^*\lambda)=\mathrm{d}(\phi_t^* H_t) + \mathrm{d}(\phi_t^*\lambda(X_t))$$
that $L_t$ is a family of exact Lagrangian submanifolds with continuously varying primitives $f_t$ of $\lambda|_{TL_t}$.
We then define the \textbf{action} function $\ell_t \colon L \cap L_t \rightarrow \mathbb{R}$ by
$$\ell_t(x) \coloneqq f_t(x) - f(x).$$

For the times $t\in[0,1] \setminus \{t_1,\dots,t_n\}$ of transverse intersection, $L \pitchfork L_t \subseteq L$ is a 0-submanifold of $L$, hence a finite set since $L$ is compact.

\subsection{Classical and modified Floer complex} \label{2.2}

Let $\beta \in \{0,1\}$, and let $M_1 \coloneqq M$ and $M_0 \coloneqq M'$. A \textbf{pseudo-holomorphic strip} in $M_\beta$ joining $x$ to $y$ with boundary on $L$ and $L_t$ is a smooth function 
$$u \colon \mathbb{R} \times [0,1] \longrightarrow M_\beta$$
such that $\mathrm{d}u \circ j = J_\beta \circ \mathrm{d} u$, where $j$ is the canonical complex structure on $\mathbb{C}$ and $J_\beta$ is a fixed $\omega $-compatible almost complex structure on $M_\beta$, i.e such that $g(v,w) \coloneqq \omega (v,J_\beta w)$ defines a Riemannian metric. In addition, $u$ must satisfy the boundary conditions $u(\mathbb{R}\times \{0\}) \subseteq L$, $u(\mathbb{R}\times \{1\}) \subseteq L_t$, and $u(s,r) \underset{s \to +\infty}{\longrightarrow} x$, $u(s,r) \underset{s \to -\infty}{\longrightarrow} y$ for all $r\in[0,1]$.

We denote $\mathcal{M}_t^\beta(x,y)$ the set of all rigid holomorphic strips in $M_\beta$ of index 1 joining $x$ to $y$ with boundary on $L$ and $L_t$, modulo holomorphic reparametrization. \textit{Rigid} means that the strip is transversely cut out as a solution, and that its expected dimension is zero. We will not expand further on this, but instead use the well-known fact that when the symplectic manifold is two-dimensional, this is equivalent to the strip being an immersion up to and including the boundary, with convex corners. See \cite{combinatorial}, Lemma 12.3, for the relation between these notions. We will abbreviate the name of the elements of $\mathcal{M}_t^\beta(x,y)$ by \textit{strips}. \\

Let $\Z=\mathbb{Z}/2\mathbb{Z}$. For $\beta\in\{0,1\}$ and $t\in[0,1] \setminus \{t_1,\dots,t_n\}$, let $C_\beta(t) \coloneqq CF(L,L_t;M_\beta) \coloneqq (C(t),\del_\beta(t))$ be the Floer complex associated with the two transverse Lagrangians $L$ and $L_t$, seen as Lagrangian submanifolds of $M_\beta$:
\begin{equation*}
    C(t) \coloneqq CF(L,L_t)=\bigoplus_{x\in L \cap L_t} \Z\cdot x,
\end{equation*}
whose differential $\del_\beta(t)$ is defined by
\begin{equation*}
    \del_\beta(t) x = \sum_{y\in L \cap L_t} n_t^\beta(x,y)y,
\end{equation*}
where $n_t^\beta(x,y)=| \mathcal{M}_t^\beta(x,y) | \mod 2$. It is a consequence of Stokes' formula and of the exactness of the Lagrangians that 
\begin{equation}\label{stokes}
	\ell_t(y)>\ell_t(x)
\end{equation}
whenever there is a strip joining $x$ to $y$ in $M$.

We then extend the definition of the action $\ell_t$ for every element of $C(t)$: 
$$\ell_t \left( \sum_{i=1}^p \lambda_i x_i \right) \coloneqq \inf \{ \ell_t(x_i) \ | \ 1 \leq i \leq p, \ \lambda_i \neq 0 \},$$
where $(x_1,\dots,x_p)$ is the list of the generators of $C(t)$ and $\lambda_1,\dots,\lambda_p \in \Z$. We use here the convention $\inf \varnothing =+\infty$. \\

Since strips are orientation preserving immersions, they have a preferred behaviour near their inputs and outputs, as shown in  Fig. \ref{strip_or}: if the strip lies on a blue corner 2 or 4, it means that the strips begins at this point; otherwise, it should end at this point.

\begin{figure}[ht]
\begin{center}
\includegraphics[scale = .25] {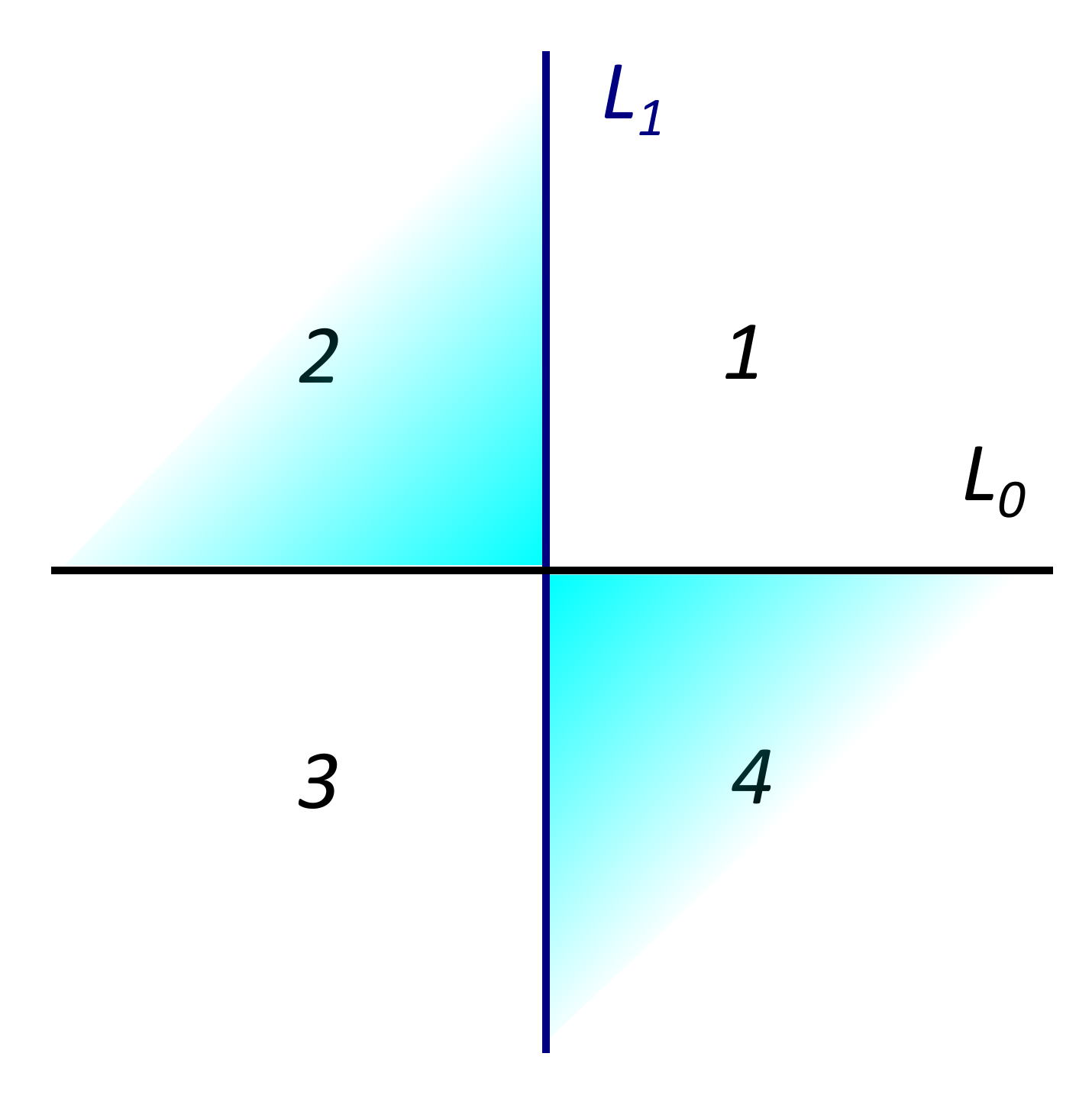}
\end{center}
\caption{Convention for the orientation of the holomorphic strips}
\label{strip_or} 
\end{figure}

The fact that $\del_\beta(t)^2=0$ lies in the heart of Floer theory, and has been proved in a more general setting in \cite{floer} in the case of aspherical symplectic manifolds, that is when $\pi_2(M,L)=0$. Note that an exact Lagrangian inside an exact symplectic surface necessarily satisfies this assumption. Floer's proof relies on Gromov's compactness theorem, which is valid for all open 2-dimensional symplectic manifolds, and hence in particular after removing a point in a symplectic surface. The reason why Gromov's compactness holds is that the maximum principle prevents holomorphic curves inside the symplectic surface from escaping to infinity. Such curves can thus be confined to some a priori given compact subset.

In addition, there is a combinatorial proof of $\del^2=0$ for many symplectic surfaces in \cite{combinatorial}. \\

%One should note that it is possible to define a similar complex with coefficients in $\mathbb{Z}$, but it requires to take spin structures on the Lagrangians. In this article, we restrict ourselves to coefficients in $\Z$. %A presentation of this variant can be found in \cite{seidel}.

The name "chain complex" may seem to be an abuse of language because we have not defined such thing as a grading for the Floer complex, but it is actually possible to define a degree $\mathrm{deg} (x)$ for the points $x\in L_0 \pitchfork L_1$ such that $\mathrm{deg} (\del x)= \mathrm{deg} (x) - 1$. The reader can refer to \cite{georgios_legend}, Subsection 2.5, or to \cite{auroux}, Subsection 1.3 for a quick presentation of the grading for the Floer complex. \\

We shall now define the modified Floer complex which is central to this article: let
\begin{equation*}
    D(t) \coloneqq CF_h(L,L_t) \coloneqq CF_h(L,L_t;M) \coloneqq \bigoplus_{x\in L \cap L_t} \Z[T]\cdot x
\end{equation*}
be the complex with differential $\del_h(t)$ defined on the generators by
\begin{equation*}
    \del_h(t) x = \sum_{y\in L \cap L_t} N_t(x,y)y.
\end{equation*}
The polynomials $N_t(x,y)$ are defined by
\begin{equation*}
    N_t(x,y)=\sum_{S\in \mathcal{M}_t^1(x,y)} T^{(S,h)} \mod 2,
\end{equation*}
where $(S,h)$ denotes the algebraic intersection number between the strip $S$ and the hole $h$. As the pseudo-holomorphic strips are orientation-preserving immersions, the algebraic intersection number is non-negative and then coincides with $| S^{-1}(\{h\}) |$: therefore, we only have to count the number of times that the strip passes over $h$.

Again, these modules are indeed chain complexes: the fact that $\del_h(t)^2=0$ is a consequence of the proof of $\del_1(t)^2=0$, and it will not be further detailed in this article. In short, since pseudo-holomorphic strips are orientation-preserving, the algebraic intersection number $| S^{-1}(\{h\}) |$ does not change in the 1-parameter families of index 2 strips. 

We extend the above definition of $\ell_t$ to all of $D(t)$ by the same formula. 

For simplicity, we will denote $\langle -,- \rangle$ the canonical inner products on $C(t)$ and $D(t)$: for $x,y \in L\cap L_t, \ \langle x,y \rangle = \delta_{x,y}$.

\section{The barcode of a piecewise continuous family of complexes}

We are going to define the main features of the theory of barcodes as presented in the second part of \cite{georgiosbarcode}.  

We will only consider finitely generated free chain complexes, but  will often omit this precision and just write complexes.

In this whole section, $k$ is a field, and $R$ denotes either $k$ or $k[T]$. All the chain complexes will be $R$-modules.

\subsection{Filtered chain complexes}

\begin{definition}
A \textbf{filtered chain complex} is a (finitely generated and free) chain complex $(C, \del)$ over the ring $R$ with a function $\ell\colon C \rightarrow \mathbb{R} \cup \{ +\infty \}$ called \textbf{action} such that
\begin{itemize}
	\item $\ell (x)=+ \infty$ if and only if $x=0$,
	\item $\forall x \in C, \ \forall \lambda \in R, \  \ell(\lambda x) = \ell(x),$
	\item $\forall x,y \in C,  \  \ell(x+y) \geq \min \{\ell(x),\ell(y)\}$,
	\item $\forall x \in C,  \  \ell(\del x) \geq \ell(x)$.
\end{itemize}
\end{definition}

As the name suggests, these complexes are filtered. Namely, there is a natural filtration by the subcomplexes $C_{>s} \coloneqq \{ x\in C \colon \ell(x)>s\}$, for $s\in\mathbb{R}$, i.e these subcomplexes verify $C_{>s'} \subseteq C_{>s}$ for $s<s'$, and $C=\bigcup_{s\in\mathbb{R}} C_{>s}$. The ring $R$ being a PID, these subcomplexes are free too.

An isomorphism in the category of filtered chain complexes $f \colon (C,\del,\ell) \overset{\sim}{\longrightarrow} (C',\del',\ell')$ is a chain-isomorphism $f \colon (C,\del) \overset{\sim}{\longrightarrow} (C',\del')$ that preserves the filtration, such that \textit{its inverse} $f^{-1}$ is also filtration-preserving (this condition is not automatic). Equivalently, it is a chain-isomorphism which preserves \textit{the action}.

Such a complex has a distinguished class of bases, called \textbf{compatible bases}: these are the bases $(a_1,\dots,a_n)$ of $C$ such that 
$$\forall (\lambda_1,\dots,\lambda_n)\in R^n, \quad \ell\left( \sum_{i=1}^n \lambda_i a_i\right)=\inf_{\lambda_i \neq 0} \ell(a_i).$$
This definition is compatible with our convention $\inf \varnothing = + \infty$.

\begin{lemma}
Any filtered complex possesses a compatible basis.
\end{lemma}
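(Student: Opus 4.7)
The plan is to construct a compatible basis by lifting bases of the successive filtration quotients. Three observations drive the argument.

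First, the action function takes only finitely many finite values on $C$. Since $R$ is a PID and $C$ is free of finite rank $n$, each $C_{>s}$ is a free submodule of $C$, and a value $s$ is attained by the action of some non-zero element if and only if $\mathrm{rank}(C_{>s'})$ strictly decreases as $s'$ crosses $s$. The total drop from $-\infty$ to $+\infty$ equals $n$, so only finitely many jump points $s_1 < \cdots < s_m$ exist; moreover $C_{>s_{i-1}} = C_{\geq s_i}$ for each $i$ (with $s_0 < s_1$ arbitrary), since no intermediate action values occur.

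Second, each successive quotient $V_i := C_{\geq s_i}/C_{>s_i}$ is a free $R$-module. This is automatic when $R = k$. When $R = k[T]$, the axiom $\ell(\lambda x) = \ell(x)$ for non-zero $\lambda$ ensures that multiplication by any non-zero $p(T) \in R$ preserves both $C_{\geq s_i}$ and $C_{>s_i}$, hence descends to an injective map on $V_i$: if $\bar x \in V_i$ is non-zero with lift $x$ of action $s_i$, then $\ell(p(T)x) = \ell(x) = s_i$ forces $p(T)x \notin C_{>s_i}$. Thus $V_i$ is a finitely generated torsion-free module over the PID $k[T]$, and hence free.

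Third, choose a basis $(\bar{a}_{i,j})_{j=1}^{r_i}$ of each $V_i$ and pick lifts $a_{i,j} \in C_{\geq s_i}$; by construction each lift satisfies $\ell(a_{i,j}) = s_i$. A downward induction on $i$, combining these lifts at stage $i$ with a basis of $C_{>s_i} = C_{\geq s_{i+1}}$ already produced at stage $i+1$, shows that $\{a_{i,j}\}_{i,j}$ is a basis of $C$. For compatibility, given $x = \sum_{i,j} \lambda_{i,j} a_{i,j}$ with some $\lambda_{i,j} \neq 0$, let $i^*$ be the smallest index $i$ with some coefficient non-zero and split $x = y + z$ where $y = \sum_j \lambda_{i^*,j} a_{i^*,j}$. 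Then $z \in C_{>s_{i^*}}$ so $\ell(z) > s_{i^*}$, while the image of $y$ in $V_{i^*}$ is non-zero (the $\bar a_{i^*,j}$ being a basis of the free module $V_{i^*}$), so $y \notin C_{>s_{i^*}}$ and $\ell(y) = s_{i^*}$; the standard non-Archimedean identity $\ell(y+z) = \min(\ell(y), \ell(z))$ when $\ell(y) \neq \ell(z)$ then yields $\ell(x) = s_{i^*} = \min\{\ell(a_{i,j}) : \lambda_{i,j} \neq 0\}$.

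The main obstacle is the second step in the $R = k[T]$ case: freeness of the associated graded piece $V_i$ is precisely where the interaction between the filtration and the $R$-module structure matters, and it is the multiplicative invariance $\ell(\lambda x) = \ell(x)$ that prevents torsion from appearing in $V_i$.
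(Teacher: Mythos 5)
Your argument is correct and follows essentially the same route as the paper: both proofs identify the filtration quotients $C_{\geq s}/C_{>s}$ (the paper uses $Q_{s,\eps}=C_{>s-\eps}/C_{>s}$ for small $\eps$, which is the same object), show they are free over the PID $R$ by observing that the axiom $\ell(\lambda x)=\ell(x)$ forces them to be torsion-free, and then lift bases of these quotients to obtain a compatible basis. Your write-up simply spells out the finiteness of the jump set and the compatibility check in more detail than the paper, which leaves those points implicit.
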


\begin{proof}
As presented in \cite{georgiosbarcode}, one can consider the quotient modules 
$$Q_{s,\eps} \coloneqq C_{>s-\eps} / C_{>s}$$
where $C_{>s} \coloneqq  \{x\in C \colon \ell(x)>s\}$ is a subcomplex of $C$ and $\eps>0$.
There is only a finite number of values of $s$ for which the quotients $Q_{s,\eps}$ are non-zero. Moreover, they are free by the axioms of the filtered chain complex. To see this, note that since $R$ is a PID it suffices to verify that the quotient is torsion free. This follows since scalar multiplication by the elements of $R$ is action-preserving. For all $\eps$ small enough, taking a basis of each $Q_{s,\eps}$ and choosing representatives of their classes yield a compatible basis for $C$.
\end{proof}

\begin{definition}\label{pwc}
A \textbf{piecewise continuous filtered chain complex} is a family of filtered chain complexes $(C(t), \del_t, \ell_t), \ t\in [0,1]$, with a finite collection of times $0<t_1<\dots<t_n<1$, such that the following conditions hold:
\begin{enumerate}
    \item for $t,t'\in (t_i,t_{i+1}), \ (C(t),\del_t)$ is chain-isomorphic to $(C(t'),\del_{t'})$ through a preferred isomorphism $\Phi_{t,t'} \colon C(t) \xrightarrow[]{\sim} C(t')$ that preserves compatible bases, with continuous action $\ell_t$ in the following sense: 
    $$\forall x \in C(t), \quad \ell_{t'}(\Phi_{t,t'}(x)) \xrightarrow[t' \to t]{} \ell_t(x).$$
    Moreover, the family is functorial, meaning that if $t,t',t^{\prime \prime} \in (t_i,t_{i+1}),$ we have
    $$\Phi_{t,t''} = \Phi_{t',t''} \circ \Phi_{t,t'}$$
    \item for each $t_i$ with $i\in\{1,\dots,n\}$, one of the following \textbf{simple bifurcations} occur:
    \begin{itemize}
        \item \textbf{Birth:} there is a 2-dimensional $R$-complex $(S,\del,\ell_t)$, such that $\ell_t$ is defined to satisfy $\ell_t(c) = \ell_t(d)$ and to be independent of time, such that $(c,d)$ is a compatible basis and $\del c=d$ (note that this differential is thus not strictly action-increasing), and a family of chain-isomorphisms that send compatible bases to compatible bases
        $$(C(t_i - \eps),\del_{t_i-\eps},\ell_{t_i-\eps}) \oplus (S,\del,\ell_{t_i-\eps}) \overset{f_\eps}{\longrightarrow} (C(t_i + \eps),\del_{t_i+\eps},\ell_{t_i+\eps})$$
        for all $\eps > 0$ small enough, such that 
        $$\forall x \neq 0, \ \ell_{t_i+\eps}(f_\eps(x))-\ell_{t_i-\eps}(x) \underset{\eps \to 0}{\longrightarrow} 0.$$ 
        Finally, the above preferred isomorphism $\Phi_{t,t'}$ extends to $(t_{i-1},t_i]$.
        \item \textbf{Death:} the family $C(-t)$ has a birth at $-t_i$.
        \item \textbf{Handle-slide:} for all $\eps>0$ small enough, there is a non-canonical chain-isomorphism that preserves compatible bases 
        $$H_i(\pm \eps) \colon (C(t_i),\del_{t_i},\ell_{t_i}) \longrightarrow (C(t_i \pm \eps), \del_{t_i \pm \eps},\ell_{t_i \pm \eps}),$$
        that is moreover represented by an upper-triangular matrix in two compatible bases ordered by decreasing action level. Finally, we have
         $$\forall x \neq 0, \ \ell_{t_i \pm \eps}(H_i(\pm \eps)(x))-\ell_{t_i}(x) \underset{\eps \to 0}{\longrightarrow} 0.$$
%and the isomorphism $\Phi_{t,t'}$ extends to $(t_{i-1},t_i].$ 
    \end{itemize}
\end{enumerate}
\end{definition}

\begin{comment} there is a family of  vector spaces isomorphisms $H_i(\eps)$ from $(C(t_i-\eps),\del_{t_i-\eps},\ell_{t_i-\eps})$ to $(C(t_i+\eps),\del_{t_i+\eps},\ell_{t_i+\eps})$ that send compatible bases to compatible bases, for all $\eps >0$ small enough. Moreover, they satisfy the equation
        $$H_i(\eps)\circ\del_{t_i-\eps}=\del_{t_i+\eps} \circ H_i(\eps),$$ and $H_i(\eps)$ is
        Finally, we have
         $$\forall x \neq 0, \ \ell_{t_i+\eps}(H_\eps(x))-\ell_{t_i-\eps}(x) \underset{\eps \to 0}{\longrightarrow} 0.$$
%and the isomorphism $\Phi_{t,t'}$ extends to $(t_{i-1},t_i].$
\end{comment}

This terminology comes from the study of the Morse complex. In our case, the handle-slide never comes without a birth/death, as we will see later. However, for higher dimensions, these bifurcations may happen independently.

Note that Condition (1) implies that even though the differential remains the same in $C(t)$, the actions of its elements change, and so does the filtered isomorphism class of $C(t)$.

\begin{comment} Finally, one should note that the convention for the choice of $C(t_i)$ themselves makes the function $t \mapsto \mathrm{dim} \ C(t)$ lower semicontinuous.
\end{comment}

\subsection{Barannikov decompositions}

The most well-studied tool for understanding chain complexes is their homology: since $\del^2=0$, the submodule of the \textbf{boundaries} $B=\mathrm{Im} \del$ is included in the submodule of the \textbf{cycles} $Z=\mathrm{Ker} \del$, and we can consider the quotient module $H(C,\del)\coloneqq Z/B$, which is called the \textbf{homology} of $C$. As we have the action $\ell$, we can even define the \textbf{persistent homology groups} $H(C_{>s},\del)$ for $s\in \mathbb{R}$. In the next subsection we will construct the barcode of a filtered complex, that will turn out to be a useful invariant for visualizing a filtered complex. We will define the barcode using the so-called Barannikov decomposition, which was first considered in \cite{barannikov}.

\begin{definition}
A \textbf{Barannikov basis} of a filtered complex $(C,\del)$ is a \textit{compatible} basis $(a_1,\dots,a_n,b_1,\dots,b_n,c_1,\dots,c_m)$ such that:
\begin{itemize}
    \item for $1\leq i\leq n, \ \del a_i=b_i,$
    \item $(b_1,\dots,b_n)$ is a basis of $B$,
    \item $(b_1,\dots,b_n,c_1,\dots,c_m)$ is a basis of $Z$.
\end{itemize}
\end{definition}

A Barannikov basis allows one to decompose the complex into more simple 1- and 2-dimensional subcomplexes: if $(a_1,\dots,a_n,b_1,\dots,b_n,c_1,\dots,c_m)$ is Barannikov, then 
$$(C,\del)=\bigoplus_{i=1}^n (Ra_i\oplus Rb_i,\del_i) \oplus \bigoplus_{j=1}^m (Rc_j,0)$$
where $\del_i$ is the restriction of $\del$ to $Ra_i\oplus Rb_i$.
This decomposition allows one to easily compute the homology of $(C_{>s},\del)$: it is freely generated by the cycles $c_j$ such that $\ell(c_j)>s$, but also by the boundaries $b_i$ such that $\ell(b_i)>s$ and $\ell(a_i) \leq s$, as in $C_{>s}$ the chain $a_i$ does not exist yet. 

Therefore, a necessary condition for a complex $C$ to possess a Barannikov basis is that all the homolgy groups $H(C_{>s},\del)$ are free $R$-modules. However this condition is not always the case if $R$ is not a field, as shown by the example below.

\begin{ex}
Let $R=k[T]$, and let $\del$ be the $R$-linear map on $C=Ra\oplus Rb$ that satisfies $\del a=Tb$ and $\del b=0$. Then $b$ is a cycle but not a boundary, although $Tb$ is a boundary. Thus, $H(C,\del)$ has torsion and is not a free $R$-module. 
\end{ex}

\begin{lemma}[Barannikov]\label{barannikov}
\hfill
\begin{enumerate}
    \item Any finite-dimensional filtered chain complex over $k$ has a Barannikov decomposition.
    \item For a finite-dimensional filtered chain complex over $R$ with a compatible basis $(x_1,\dots,x_n)$ such that $\ell(x_1) < \dots < \ell(x_n)$, any two Barannikov bases (if they exist) are related by an upper-triangular change of basis matrix.
\end{enumerate}
\end{lemma}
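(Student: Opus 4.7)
The plan is to handle the two parts separately: existence through a column-reduction algorithm, and uniqueness through an invariance property of the action filtration.

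For part (1), I would start from any compatible basis $(x_1,\dots,x_n)$ ordered by decreasing action $\ell(x_1)\geq\dots\geq\ell(x_n)$. The matrix $M$ of $\del$ in this basis is strictly upper triangular: upper triangularity comes from $\ell(\del x)\geq\ell(x)$, and working over the field $k$ together with $\del^2=0$ forces zero diagonal entries (if $M_{jj}\neq 0$ the coefficient of $x_j$ in $\del^2 x_j$ would be $M_{jj}^2\neq 0$). Now sweep columns from left to right: for each $j=1,\dots,n$, perform operations $x_j\mapsto x_j-\lambda x_k$ with $k<j$ sharing the current pivot row of column $j$, until column $j$ is either zero or has a pivot row not shared with any earlier nonzero column. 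Because $k<j$ entails $\ell(x_k)\geq\ell(x_j)$, each substitution preserves compatibility of the basis.

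Let $P$ index the nonzero columns after the sweep and $p\colon P\to\{1,\dots,n\}$ the (now injective) pivot map. Using $\del^2=0$ and the pairwise distinctness of the pivots, one verifies that $P$ and $p(P)$ are disjoint, so the complement $R:=\{1,\dots,n\}\setminus(P\sqcup p(P))$ indexes cycles. For each $j\in P$, rescale $a_j:=x_j/M_{p(j),j}$ and set $b_j:=\del a_j$; then $b_j=x_{p(j)}$ plus terms of strictly lower action, so replacing $x_{p(j)}$ by $b_j$ is a unit-diagonal upper-triangular change of basis that preserves compatibility. The resulting family $(a_j, b_j, c_r)_{j\in P,\,r\in R}$ is a Barannikov basis.

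For part (2), I would first note an invariance property: for any compatible basis $(v_1,\dots,v_N)$, the submodule $C_{>s}$ is freely generated over $R$ by $\{v_i : \ell(v_i)>s\}$, so its rank equals $|\{i:\ell(v_i)>s\}|$. Consequently the multiset of action values of a compatible basis is an intrinsic invariant of the filtered complex. The hypothesis provides a compatible basis with strictly distinct actions, so this multiset is a set of $N$ distinct values, and any two Barannikov bases $\mathcal{B}=(y_1,\dots,y_N)$ and $\mathcal{B}'=(z_1,\dots,z_N)$, once reordered by strictly decreasing action, satisfy $\ell(y_i)=\ell(z_i)$ for all $i$. Writing $z_j=\sum_i c_{ij}\, y_i$ with $c_{ij}\in R$, compatibility of $(y_i)$ yields
\[
\ell(y_j)=\ell(z_j)=\min_{c_{ij}\neq 0}\ell(y_i),
\]
and since the $\ell(y_i)$ are strictly decreasing in $i$, any nonzero $c_{ij}$ forces $i\leq j$. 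Hence $(c_{ij})$ is upper-triangular.

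The main obstacle is the disjointness $P\cap p(P)=\emptyset$ and the preservation of compatibility under the renaming step in part (1). Both rely on a careful interplay between $\del^2=0$ and the strict ordering of actions; once they are established, the remainder of the argument is essentially bookkeeping.
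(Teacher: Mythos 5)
The paper does not actually prove this lemma; it simply cites \cite{barannikov}, so your write-up is genuinely new content rather than a reconstruction of an existing argument. Your approach (column reduction for existence, intrinsic filtration rank for uniqueness) is the standard one and the right idea; part (2) of your argument is correct as written. The rank computation shows the multiset of basis actions is an invariant, and the strict ordering then forces $c_{ij}=0$ for $i>j$ exactly as you say.

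There is, however, a real gap in part (1), coming from an inconsistency in your pivot convention. You write that $b_j=\del a_j$ equals ``$x_{p(j)}$ plus terms of strictly lower action,'' which means $p(j)$ is the \emph{highest}-action nonzero row of column $j$. But then $\ell(b_j)$ equals the \emph{lowest} action appearing in $\del a_j$, which is strictly less than $\ell(x_{p(j)})$ in general, and the substitution $x_{p(j)}\mapsto b_j$ does \emph{not} preserve compatibility. Concretely, take $(x_1,x_2,x_3)$ with $\ell(x_1)=3>\ell(x_2)=2>\ell(x_3)=1$, $\del x_3=x_1+x_2$, $\del x_1=\del x_2=0$. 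Your rule sets $p(3)=1$, $a_3=x_3$, $b_3=x_1+x_2$, and replaces $x_1$ by $b_3$ to get the basis $(b_3,x_2,x_3)$. But $b_3-x_2=x_1$ has action $3>\min\{\ell(b_3),\ell(x_2)\}=2$, so $(b_3,x_2,x_3)$ is not a compatible basis. (Note also that you call this substitution ``upper-triangular,'' which in your decreasing-action ordering would require the extra terms to lie at rows $<p(j)$, i.e. \emph{higher} action — contradicting your own ``strictly lower action.'') The fix is to take $p(j)$ to be the \emph{lowest}-action nonzero row of column $j$ (the standard ``low'' of persistence reduction). Then $\ell(b_j)=\ell(x_{p(j)})$, $b_j - x_{p(j)}$ lies strictly above action level $\ell(x_{p(j)})$, and the substitution is a genuinely unit-upper-triangular, compatibility-preserving change of basis; in the example this produces $(x_1,b_3,x_3)$, which is a Barannikov basis. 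The sweep and the disjointness $P\cap p(P)=\emptyset$ (which follows from $\del^2=0$ and distinctness of the lows) are both fine with this convention; only the identification of $p(j)$ needs to be changed.
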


The proof of the previous statement can be found in \cite{barannikov}. In particular, its first statement guarantees the existence of a Barannikov basis on the complex $C(t)$ for every $t\in [0,1]$. Nevertheless, if $R$ is not a field, then a Barannikov basis does not necessarily exist, as shown in the above example.
Moreover, the second statement is not sufficient for us since we need to handle the cases where there is redundancy in the action levels.

Let us first reformulate the existence of the Barannikov decomposition in the case of $k[T]$:

\begin{proposition}\label{standard}
Let $R=k[T]$, and let $(C_T,\del_T,\ell_T)$ be a filtered chain complex over $R$. Then, $(C_T,\del_T,\ell_T)$ has a Barannikov basis if and only if there exists a filtered chain complex $(C,\del,\ell)$ over $k$ such that there is an action-preserving chain-isomorphism
$$(C_T,\del_T,\ell_T) \overset{\sim}{\longrightarrow} (C,\del,\ell) \underset{k}{\otimes} k[T].$$
\end{proposition}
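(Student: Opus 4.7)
The plan is to prove both implications by constructing Barannikov bases explicitly and transferring them across the tensor product. In the forward direction, I would use the Barannikov basis of $C_T$ to build $C$ by hand; in the backward direction, I would apply part (1) of Lemma \ref{barannikov} to $(C,\partial,\ell)$ — legitimate because $k$ is a field — then tensor the resulting Barannikov basis up to $C \otimes_k k[T]$ and transport it through the isomorphism.

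For the direction $(\Rightarrow)$, fix a Barannikov basis $\mathcal{B} = (a_1, \dots, a_n, b_1, \dots, b_n, c_1, \dots, c_m)$ of $C_T$. I would let $C$ be the finite-dimensional $k$-vector space with basis $\mathcal{B}$, equip it with the differential $\partial a_i = b_i$, $\partial b_i = 0$, $\partial c_j = 0$ inherited from $\partial_T$ on basis elements, and define the action $\ell$ by copying the values of $\ell_T$ on $\mathcal{B}$ and extending to $k$-linear combinations via the compatibility formula. The axioms of a filtered chain complex over $k$ are straightforward: $\partial^2 = 0$ is built into the shape of $\mathcal{B}$, and action-increase for $\partial$ is inherited from $C_T$. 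The $k[T]$-linear extension of the identification $C \hookrightarrow C_T$ then gives a map $\iota \colon C \otimes_k k[T] \to C_T$ that sends $k[T]$-basis to $k[T]$-basis, is a chain map by construction, and is action-preserving because $\mathcal{B}$ is compatible both in $C_T$ (being Barannikov) and in $C \otimes_k k[T]$ (via the tensor-product filtration), with identical values on the basis.

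For the direction $(\Leftarrow)$, let $\Phi \colon C_T \xrightarrow{\sim} C \otimes_k k[T]$ be the given action-preserving chain-isomorphism. By Lemma \ref{barannikov}(1) applied to $(C,\partial,\ell)$, there is a Barannikov basis $(a_1, \dots, a_n, b_1, \dots, b_n, c_1, \dots, c_m)$ of $C$ over $k$. I would claim that the tensored family $(a_i \otimes 1, b_i \otimes 1, c_j \otimes 1)$ is a Barannikov basis of $C \otimes_k k[T]$: it is a free $k[T]$-basis of the tensor product, the extended differential takes $a_i \otimes 1$ to $b_i \otimes 1$, and since $k[T]$ is flat over the field $k$ the boundaries and cycles of the tensored complex are $B(C) \otimes_k k[T]$ and $Z(C) \otimes_k k[T]$, freely $k[T]$-spanned by $(b_i \otimes 1)$ and $(b_i \otimes 1, c_j \otimes 1)$ respectively. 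Compatibility of the tensored family follows from compatibility of the original $k$-basis and the tensor-product filtration. Finally, pulling this basis back through $\Phi^{-1}$, which is itself action-preserving and hence sends compatible bases to compatible bases, yields a Barannikov basis of $C_T$.

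The one subtle point, and the place I would expect a careful reader to pause, is the definition of the filtration on $C \otimes_k k[T]$: the filtered-complex axioms force $\ell(x \otimes p) = \ell(x)$ whenever $p \in k[T]$ is nonzero, and one verifies that setting $\ell(\sum_j a_j \otimes p_j) = \min_{p_j \neq 0} \ell(a_j)$ for any compatible $k$-basis $(a_j)$ of $C$ gives a well-defined filtered structure that turns the tensored basis into a compatible $k[T]$-basis, independent of the choice of compatible $k$-basis. Once this is in place, no genuinely hard step remains; the core of the argument is simply the observation that a compatible $k$-basis of a Barannikov-decomposed $C$ tensors up to a compatible Barannikov $k[T]$-basis of $C \otimes_k k[T]$, and vice versa.
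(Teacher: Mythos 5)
Your proof follows the same two-step strategy as the paper's: in the forward direction you build $C$ as the $k$-span of the Barannikov basis of $C_T$ with the differential and action inherited on basis elements, and in the backward direction you apply Lemma~\ref{barannikov}(1) to $C$, tensor the resulting Barannikov basis up to $C\otimes_k k[T]$, and transport it through the given action-preserving chain-isomorphism. You supply more detail than the paper (the flatness argument identifying cycles and boundaries of the tensored complex, and the explicit construction of the filtration on $C\otimes_k k[T]$), but the argument is essentially identical in structure and content.
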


\begin{proof}
If $B=(a_1,\dots,a_n,b_1,\dots,b_n,c_1,\dots,c_m)$ is a Barannikov basis of the filtered chain complex $(C_T,\del_T,\ell_T)$, then we define the formal $k$-module
$$C \coloneqq k \langle B \rangle,$$
which we equip with the $k$-linear differential $\del \colon C \rightarrow C$ such that $\del a_i = b_i$, $\del b_i = 0$ and $\del c_j = 0$, and the action $\ell$ such that $B$ is a compatible basis of $(C,\del,\ell)$ and $\ell(x)=\ell_T(x)$ for each element $x$ of the basis $B$.
It is now obvious that $(C_T,\del_T,\ell_T)$ and $(C,\del,\ell) \underset{k}{\otimes} k[T]$ are isomorphic as filtered chain-complexes over $k[T]$.

Conversely, if we have an action-preserving chain-isomorphism
$$(C,\del,\ell) \underset{k}{\otimes} k[T] \overset{f}{\longrightarrow} (C_T,\del_T,\ell_T),$$
we can find a Barannikov basis $B$ for $(C,\del,\ell)$ thanks to Lemma \ref{barannikov}, and taking $f(B)\subseteq C_T$ we get a Barannikov basis for $(C_T,\del_T,\ell_T)$.
\end{proof}

\subsection{Towards the barcode} \label{3.3}

The next definition identifies the class of filtered chain-complexes we will be working with.

\begin{definition}
A filtered chain complex of $R$-modules $(C,\del,\ell)$ satisfying one of the equivalent conditions in Proposition \ref{standard} is called a \textbf{standard} complex. 
\end{definition}

The following proposition will provide a uniqueness result for the Barannikov basis of a standard complex.

\begin{proposition}
If $(C_T,\del_T,\ell_T)$ is a standard filtered $k[T]$-complex, there is a filtered $k$-complex $(C,\del,\ell)$, which is unique up to action-preserving chain-isomorphism, such that
$$(C_T,\del_T,\ell_T) \simeq (C,\del,\ell) \underset{k}{\otimes} k[T].$$

Moreover, the form of the Barannikov decomposition is unique for a standard $k[T]$-complex.
\end{proposition}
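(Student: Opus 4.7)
The plan is to establish both statements via the barcode --- the multisets $\{(\ell_T(a_i),\ell_T(b_i))\}$ and $\{\ell_T(c_j)\}$ extracted from any Barannikov decomposition --- by showing it is an intrinsic invariant of the standard complex $(C_T,\del_T,\ell_T)$, and then deducing uniqueness of the underlying $k$-complex from uniqueness of this invariant.

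Existence of $(C,\del,\ell)$ in the first claim is immediate from Proposition \ref{standard}, which is precisely the definition of \emph{standard}. For uniqueness of the form of the Barannikov decomposition (the ``moreover'' statement), I would express the multiplicity of each bar in terms of the ranks of the inclusion-induced persistence maps $H(C_{T,>s'},\del_T)\to H(C_{T,>s},\del_T)$ for $s\le s'$, quantities which are intrinsic to $(C_T,\del_T,\ell_T)$. A direct computation from any Barannikov decomposition shows that this rank equals the number of pairs with $\ell_T(a_i)\le s$ and $\ell_T(b_i)>s'$, plus the number of generators $c_j$ with $\ell_T(c_j)>s'$ --- in other words, the count of bars whose left endpoint is $\le s$ and right endpoint is $>s'$. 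Inclusion-exclusion on these ranks as $s$ and $s'$ vary recovers the multiplicity of each individual bar without reference to a specific basis, thereby handling the redundancy in action levels that Lemma \ref{barannikov}(2) does not cover.

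For uniqueness of $(C,\del,\ell)$ up to action-preserving chain-isomorphism, suppose $(C^{(1)},\del^{(1)},\ell^{(1)})$ and $(C^{(2)},\del^{(2)},\ell^{(2)})$ both realise the required tensor-product isomorphism. Lemma \ref{barannikov}(1) provides a Barannikov basis for each, and tensoring with $1\in k[T]$ produces a Barannikov basis of $(C_T,\del_T,\ell_T)$ with identical action data. By the uniqueness just established, the barcodes of $(C^{(1)},\del^{(1)},\ell^{(1)})$ and $(C^{(2)},\del^{(2)},\ell^{(2)})$ coincide as multisets; a bijection between their Barannikov bases matching actions pair-by-pair then defines an action-preserving chain-isomorphism, since the Barannikov decomposition splits each side as a direct sum of the same $1$- and $2$-dimensional interval subcomplexes.

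The main obstacle will be making the persistence-rank argument faithful at action levels shared by several bars. To count bars with prescribed coincident endpoints one needs to take one-sided limits, comparing $\mathrm{rank}\,(H(C_{T,>s'+\eps})\to H(C_{T,>s-\eps}))$ as $\eps\to 0^+$ against the naive $(s,s')$-rank, so as to separate bars ending exactly at a given level from those ending just before or after. Once this bookkeeping is carried out and the multiplicities are shown to be recoverable from these intrinsic ranks, both uniqueness statements follow formally.
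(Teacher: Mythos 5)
Your proof is correct in outline, but it takes a genuinely different route from the paper. The paper constructs an intrinsic $k$-complex directly as the quotient $\tilde{C} \coloneqq C_T/T\cdot C_T$ with induced differential $\tilde{\del}$ and action $\tilde\ell(\tilde x) = \sup_{p(x)=\tilde x}\ell_T(x)$, then checks that any action-preserving chain-isomorphism $f\colon C_T\to C\otimes_k k[T]$ descends to an action-preserving chain-isomorphism $\tilde f\colon\tilde C\to C$; uniqueness of $(C,\del,\ell)$ is immediate because $\tilde C$ is defined without reference to any choice, and the uniqueness of the Barannikov form then follows from the $k$-case since $p$ sends Barannikov bases of $C_T$ to Barannikov bases of $\tilde C$. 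You instead establish uniqueness of the Barannikov form first, via the persistence rank invariant $\mathrm{rank}\bigl(H(C_{T,>s'})\to H(C_{T,>s})\bigr)$, and then deduce uniqueness of the $k$-complex from the resulting barcode and the interval decomposition. Both arguments work, and it is worth noting why yours also goes through over $k[T]$: since $k[T]$ is flat over $k$, standardness gives $H(C_{T,>s})\cong H(C_{>s})\otimes_k k[T]$, so the $k[T]$-ranks you use equal the $k$-dimensions for the underlying $k$-complex, putting you squarely in classical persistence territory. The trade-off: the paper's quotient construction is shorter, avoids the structure theory of persistence modules, and hands you the underlying $k$-complex functorially; your rank-invariant argument is more indirect and requires the one-sided-limit bookkeeping you flag as the ``main obstacle'' at coincident action levels, but it makes explicit that the barcode itself is the complete invariant and connects the result to standard TDA machinery. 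If you pursue your route, do carry out the endpoint bookkeeping carefully and state the interval-decomposition fact you are invoking in the final step; as written these are sketched rather than proved.
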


\begin{proof}
$(C_T\del_T,\ell_T)$ begin standard, we can choose a filtered $k$-complex $(C,\del,\ell)$ such that we have an action-preserving chain-isomorphism
$$(C_T,\del_T,\ell_T) \overset{f}{\longrightarrow} (C,\del,\ell) \underset{k}{\otimes} k[T].$$

First, we define the $k$-vector space
$$\tilde{C} \coloneqq C_T / T \cdot C_T,$$
where $T \cdot C_T$ denotes the submodule that is induced by the ideal $\langle T \rangle$, which turns out to be a subcomplex. Let $p \colon C_T \twoheadrightarrow \tilde{C}$ be the natural projection.
$\tilde{C}$ naturally comes with a differential $\tilde{\del}$ such that 
$$\forall x \in C_T, \ \tilde{\del} (p(x)) = p(\del x).$$

Then, we define an action $\tilde{\ell}$ on $\tilde{C}$ by 
$$\tilde{\ell}(\tilde{x}) \coloneqq \sup_{p(x)=\tilde{x}} \ell_T(x).$$
The verification of the axioms is straightforward and will not be detailed here. 

One should note that the same construction, when applied to $(C,\del,\ell) \otimes k[T]$, gives back the complex $(C,\del,\ell)$. Let $p' \colon C \otimes k[T] \twoheadrightarrow C$ be the natural projection through this natural identification.

%. We take $(a_1,\dots,a_n)$ a compatible basis of $(C,\del,\ell)$, and we set $b_i \coloneqq f^{-1}(a_i)$ for $1 \leq i \leq n$. The basis $(b_1,\dots,b_n)$ is then a compatible basis of $(C_T,\del_T,\ell_T)$. Now, the projection $p \colon C_T \twoheadrightarrow \tilde{C}$ has a section $s \colon \tilde{C} \hookrightarrow C_T$, where $s(x)$ is given by the representative of the class $x$ with no coefficient in $k[T] \setminus k$ in the basis $(b_1,\dots,b_n)$. This section $s$ is a chain-morphism because the differential $\del_T$ induces no coefficients in $k[T] \setminus k$ in the basis $(b_1,\dots,b_n)$ as $f$ is a chain-isomorphism. Then, we define
%$$\forall x \in \tilde{C}, \ \tilde{\ell}(x) \coloneqq \ell_T (s(x)).$$
%The axioms of the action for a $k[T]$-complex guarantee that $\tilde{\ell}$ indeed defines an action.

It is obvious that $f(T \cdot C_T)=T \cdot (C \otimes k[T])$, so the quotient map $\tilde{f}$ is a linear bijection
$$(\tilde{C},\tilde{\del},\tilde{\ell}) \overset{\tilde{f}}{\longrightarrow} (C,\del,\ell).$$
The definition of $\tilde{\del}$ shows that $f$ is a chain-map. To see that it is action-preserving, let $\tilde{x} \in \tilde{C}$ and $x\in C_T$ such that $p(x)=\tilde{x}$: we have
$$\ell \left( \tilde{f}(\tilde{x}) \right) = \ell \left(p'(f(x)) \right)=\sup_{y\in C \otimes k[T]} \ell \left(f(x) + Ty \right) = \sup_{z\in C_T} \ell_T \left(x+ Tz \right)=\tilde{\ell}\left(\tilde{x} \right),$$ 
since $\tilde{f} \circ p = p' \circ f$.
%To prove that $\tilde{f}$ is an action-preserving chain-isomorphism, note that the image of $f \circ s$ lies in $C$, so $\tilde{f}=f \circ s$, with $s$ and $f$ both being action-preserving chain-morphisms. 
This shows that $(C,\del,\ell)$ is chain-isomorphic to $(\tilde{C},\tilde{\del},\tilde{\ell})$, the latter filtered complex being independent of the choice of $(C,\del,\ell)$, thus concluding the proof of the first statement. The second one follows from the first bullet point of Lemma \ref{barannikov}, since a Barannikov basis for $C$ is sent to a Barannikov basis for $\tilde{C}$ under the projection map $p$.
\end{proof}

%In view of the above proposition, the form of the Barannikov decomposition of a filtered $R$-complex is uniquely determined by its filtered isomorphism class. Therefore, the following definition makes sense.

We are now able to define the barcode of a standard complex:

\begin{definition}
\hfill
\begin{enumerate}
    \item A \textbf{barcode} is a finite (multi-)set of semi-closed intervals of the form $(e,s]$, called bars, where $s\in\mathbb{R}$ is the starting point, and $e\in\mathbb{R\cup \{-\infty\}}$ is the endpoint of the bar.
   \item The barcode of a standard complex $(C,\del,\ell)$, denoted by $\mathcal{B}(C,\del,\ell)$, is defined as the barcode with the bars $(\ell(a_i),\ell(b_i)]$ and $(-\infty,\ell(c_j)]$, where $(a_1,\dots,a_n,b_1,\dots,b_n,c_1,\dots,c_m)$ is a Barannikov basis of $(C,\del,\ell)$.
\end{enumerate}
\end{definition}

\section{Invariance of $CF_h(L,L_t)$ under Hamiltonian isotopies}

Let us remind ourselves of the family of chain complexes $(D(t),\del_h(t))$: we have
$$D(t)=C_h(L_0,L_t;M)=\bigoplus_{x\in L \cap L_t} \Z[T] \cdot x,$$
with 
$$\langle \del_h(t)(x),y\rangle = \sum_{S \in \mathcal{M}_t(x,y)}  T^{(S,h)}$$
being defined for $t \in [0,1]$ such that $L \pitchfork L_t$. Here, $\mathcal{M}_t(x,y)$ denotes the $t$-dependent set of rigid holomorphic strips in $M$ joining $x$ to $y$ and $(S,h)$ the algebraic intersection number of the strip $S$ and the point $h$. Moreover, we have defined an action $\ell_t$ for the complex $D(t)$ by
$$\ell_t \left( \sum_{i=1}^p \lambda_i x_i \right) \coloneqq \inf \{ \ell_t(x_i) \ | \ 1 \leq i \leq p, \ \lambda_i \neq 0 \},$$
where $(x_1,\dots,x_p)$ is the list of the generators of $D(t)$. One should note that the differential $\del_h(t)$, as well as $\del_\beta(t)$ for $\beta \in \{0,1\}$, is strictly action-increasing with this action $\ell_t$. Indeed, it is a consequence of the definition of $\ell_t$ and Equation \eqref{stokes}. 

We denote $\{t_1<\dots<t_n\}$ the set of all times $t$ in which $L$ is not transverse to $L_t$.
%Remember that the complex $D(t)$ is not well-defined at these moments. However, we get a filtered vector space with a canonical basis that consists of the transverse intersection points (thus disregarding the non-transverse intersections). 
We define 
$$\Delta t \coloneqq \inf_{1\leq i<j \leq n} t_j-t_i.$$

Our first milestone for the proof of Theorem \ref{thm1} is the following statement.

\begin{theorem}\label{thm}
After a slight continuous perturbation of the actions of the canonical basis elements of the family $D(t)$ of complexes, one obtains a family $\tilde{D}(t)$ of complexes which is a piecewise continuous family of filtered chain complexes, where the canonical identification of basis elements of $D(t)$ and $\tilde{D}(t)$ moreover gives an action-preserving chain-isomorphism away from some small neighborhood of the non-transverse moments $t_1,\dots,t_n$.
\end{theorem}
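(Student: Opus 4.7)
The plan is to leave the family $D(t)$ and its differential $\del_h(t)$ untouched and only perturb the actions $\ell_t$, in small neighborhoods of each bifurcation time $t_i$, so as to realize the simple-bifurcation model of Definition \ref{pwc}. Away from the $t_i$'s, the intersection set $L\cap L_t$ depends smoothly on $t$ by the implicit function theorem, which yields the canonical bijection $\Phi_{t,t'}$ on generators satisfying the functoriality in Definition \ref{pwc}(1). The action $\ell_t(x)=f_t(x)-f(x)$ is continuous in $t$ because the primitives $f_t$ of $\lambda|_{TL_t}$ vary smoothly via the Cartan formula recalled in the excerpt. The differential $\del_h(t)$ is locally constant on each $(t_i,t_{i+1})$: by Gromov compactness and transversality the set of rigid strips varies continuously, and since the multiplicities $(S,h)$ are non-negative integers they are also locally constant. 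Hence on these open intervals the identity already gives an action-preserving chain-isomorphism $D(t)\simeq\tilde{D}(t)$.

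At each $t_i$, generically $L$ and $L_{t_i}$ are tangent at a single Morse-type point $p$, so as $t$ crosses $t_i$ either a pair of intersections $(c,d)$ is born or such a pair dies. Treating the birth case, I pick a Darboux chart around $p$ disjoint from $h$; a small local bigon bounded by $L$ and $L_t$ connects $c$ to $d$ inside the chart and avoids $h$, so it contributes $T^0=1$ to $\langle\del_h(t)c,d\rangle$. A uniform lower energy bound on any non-local strip, combined with Gromov compactness, rules out additional rigid strips with endpoint $c$ or $d$ for $t$ sufficiently close to $t_i$. A triangular change of basis absorbing the coefficients of $c$ appearing in the differential of the other generators then yields a splitting
$$(D(t_i+\eps),\del_h(t_i+\eps))\simeq(D(t_i-\eps),\del_h(t_i-\eps))\oplus(S,\del_S),$$
where $S=\Z[T]\langle c,d\rangle$ with $\del_S c=d$, matching the birth model of Definition \ref{pwc}(2).

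To define the perturbation, note that $\ell_t(c)$ and $\ell_t(d)$ both converge to $\ell_{t_i}(p)$ as $t\to t_i^+$, with difference equal to the area of the shrinking bigon. I set $\tilde{\ell}_t(c)=\tilde{\ell}_t(d)$ equal to this common limit on a sub-interval $[t_i-\delta,t_i+\delta]$, interpolate continuously back to $\ell_t$ outside a slightly larger interval, and leave all other generators' actions unchanged. This perturbation is continuous and $C^0$-small, and on $[t_i-\delta,t_i+\delta]$, together with the splitting above, it realizes the birth model: the direct summand $S$ has time-independent equal actions on $c,d$, and the complementary summand, identified with $\tilde{D}(t_i-\eps)$, extends $\Phi_{t,t'}$ across $t_i$ as required. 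The death case is symmetric. Handle-slides, which the excerpt notes always occur alongside a birth/death in the 2-dimensional setting, are absorbed into the triangular basis change used for the splitting.

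The main obstacle is the splitting step: one must control the $t$-parameterized moduli spaces of rigid holomorphic strips as $t\to t_i$ to rule out the appearance of intermediate-area strips, and one must verify that no strip gains or loses intersections with $h$ as $t$ crosses $t_i$. The $C^0$-smallness of the perturbation is then a consequence of the area of the local bigon tending to $0$ as $\eps\to 0$. Keeping track of the $T$-variable is what distinguishes this bulk-deformed setting from the classical one, and it is here that the disjointness of the local bigon from $h$ is crucial.
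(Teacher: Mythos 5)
Your overall strategy is the same as the paper's: realize each birth by flattening the actions of the two newborn generators $c,d$, split off the $S$ summand via a triangular change of basis, and use the disjointness of the local bigon from $h$ so that the bigon contributes $T^0$. However there is a substantive gap in the crucial splitting step. The sentence claiming that an energy lower bound ``rules out additional rigid strips with endpoint $c$ or $d$'' is not correct: after the birth there generically \emph{are} rigid strips from some $a_i$ to $c$, from $a_i$ to $d$, and from $c$ to $b_k$, all with areas bounded away from zero; only extra strips \emph{from $c$ to $d$} are excluded by the energy argument (the paper's Lemma \ref{lemma3}). Moreover, the real difficulty is not ``ruling out intermediate-area strips'' as you frame it, but a positive combinatorial fact: a rigid strip from $a_i$ to $b_k$ that disappears at the tangency moment breaks into a strip from $a_i$ to $d$ and a strip from $c$ to $b_k$, with the intersection numbers with $h$ adding, so that $\langle\del_1 a_i,b_k\rangle-\langle\del_0 a_i,b_k\rangle=\langle\del_1 a_i,d\rangle\langle\del_1 c,b_k\rangle$. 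This is what forces the chain-isomorphism to send $d\mapsto\del_1 c$ rather than $d\mapsto d$, and it is the content of the paper's Propositions \ref{prop3}, \ref{prop4}, \ref{lastprop} and \ref{chain map H}. Your ``triangular change of basis absorbing the coefficients of $c$'' corrects the $c$- and $a_j$-components of $\del_1 a_i$ but does not account for the changed $b_k$-components, which is exactly where the above relation is needed; simply controlling moduli spaces in $t$ is not enough to get the splitting.

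On a structural point, the paper does not perform a two-sided interpolation of actions around $t_i$; it moves the birth of the trivially-acyclic $S$ summand to the earlier time $t_i-\Delta t/4$, keeps $S$'s action constant, and then performs a handle-slide via $H$ precisely at $t_i$. Your version, which packages the handle-slide into the chain-isomorphism $f_\eps$ of the birth model, is also admissible under Definition \ref{pwc}, but the interpolation of $\tilde{\ell}_t$ back to $\ell_t$ on $(t_i,t_i+\delta']$ is an extra complication that the paper's one-sided placement avoids, and in any case it does not substitute for proving that $H$ is a chain map.
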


The goal of this part is to prove Theorem \ref{thm}. The family $\tilde{D}(t)$ will be defined in Subsection \ref{sub}, and the proof of Theorem \ref{thm1} will be completed in Subsection \ref{sec}.

\subsection{First observations}

We start by breaking down the transformations undergone by the complex $D(t)$ through time variations into elementary steps as pictured in Fig. \ref{fig1}, which describes a birth moment in which precisely two new intersections $c$ and $d$ are born. 

The following discussion has been inspired by \cite{chekanov}; although the context of Floer homology is much simpler than the differential graded algebra used by Chekanov, there is a deep relationship between these two setups. Indeed, a pair of Lagrangian embeddings lifts to a pair of Legendrian embeddings in the contactization of the symplectic manifold, and the Floer complex of the pair of Lagrangians can then be recovered from Chekanov's DGA of the pair of Legendrians. For more details, the reader can refer to \cite{georgios_legend}.

\begin{figure}[ht]
\begin{center}
\includegraphics[scale = .25] {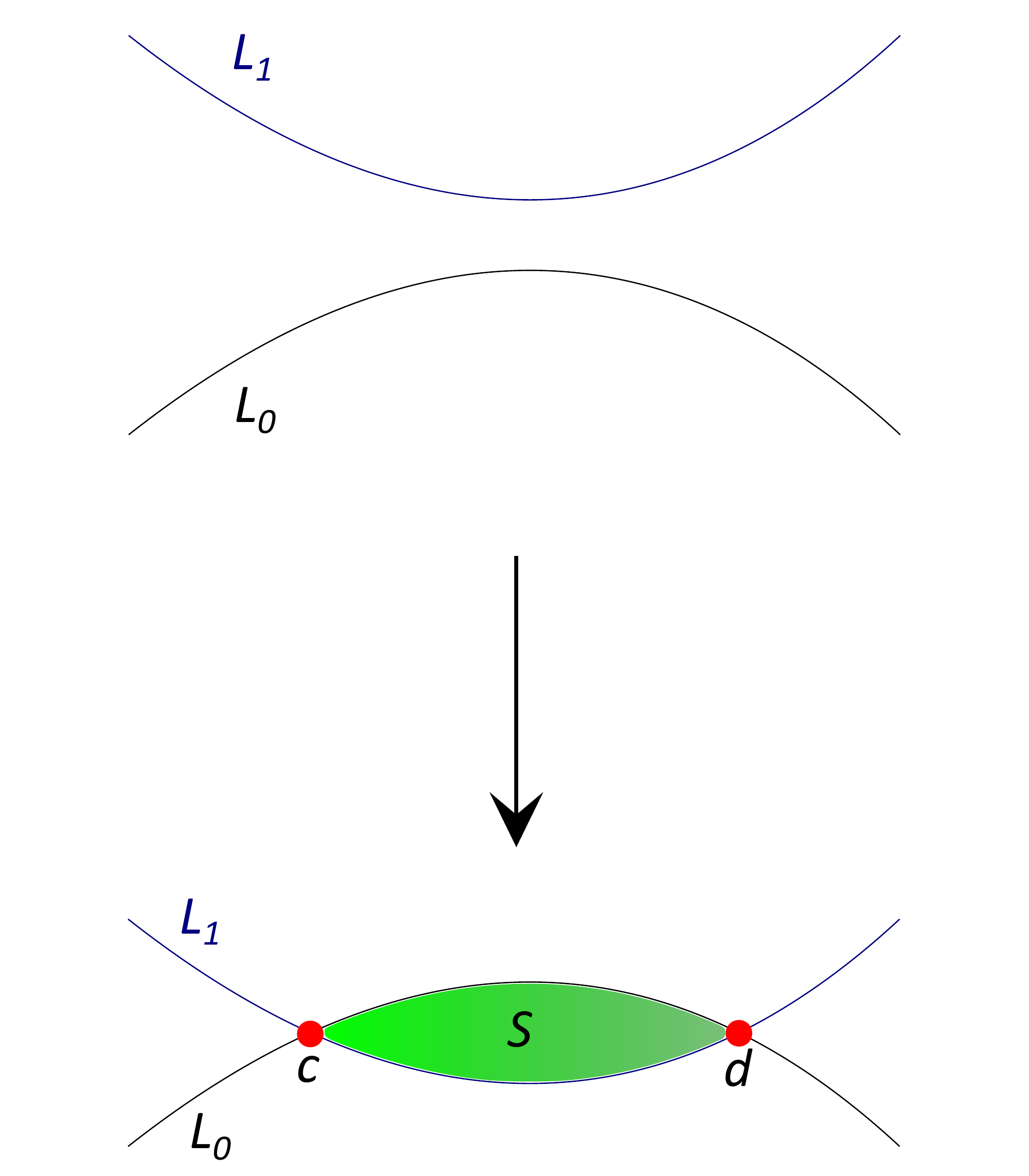}
\end{center}
\caption{An elementary crossing}
\label{fig1}
\end{figure}

Though several crossings could happen simultaneously at a time $t_0$, we can let them happen successively by adding a small perturbation $\delta(x,t)$ to the Hamiltonian, with support in $(t_0-\eps,t_0+\eps) \times M'$, with $\eps<\Delta t.$ Thus, it is correct to assume that the bifurcations are  completely described by Fig. \ref{fig1}. % In addition, we can assume that all the generators have distinct actions during this process.

We now consider the case of a birth moment at $t=t_0$, which means that there are two new transverse intersection points $c$ and $d$ for $t>t_0$ as shown in Fig. \ref{fig1}.
Denote $\del_0 \coloneqq \del_h(t_0-\eps)$ and $\del_1 \coloneqq \del_h(t_0+\eps)$.
Let us order the generators of $D(t_0+\eps)$, that is the elements of $L\cap L_{t_0+\eps}$, by decreasing actions:
$$\ell(b_m)\geq\dots>\ell(b_1)\geq \ell(d)> \ell(c)\geq \ell(a_1)\geq\dots\geq \ell(a_n).$$
Here we assume that this order does not change between $t_0-\eps$ and $t_0+\eps$, so that we can omit the indices on $\ell$. Again, a small Hamiltonian perturbation allows us to make this assumption without loss of generality.

We first focus on the new born points (Lemma \ref{lemma3}), but also on the strips that survive through the crossing (Proposition \ref{prop3}).

\begin{lemma}\label{lemma3} There is a unique strip joining $c$ to $d$. Moreover, it does not pass through the hole, so we have
\begin{equation*}
    \langle \del_1 c, d\rangle = 1.
\end{equation*}
\end{lemma}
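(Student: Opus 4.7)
The plan is to combine a local geometric identification of the lens-shaped region between $L$ and $L_{t_0+\eps}$ with an area-and-monotonicity argument that rules out every other strip from $c$ to $d$.

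First, I analyze the local picture. At $t=t_0$ the Lagrangians $L$ and $L_{t_0}$ are tangent at the birth point $p$, and for $t = t_0+\eps$ they meet transversely at two nearby points $c$ and $d$. Identifying $L_t$ with the graph $\mathrm{gr}(\mathrm{d} f_t) \subseteq T^* L$ in a small Darboux neighborhood of $p$, the family $f_t$ is Morse with a pair of new critical points close to $p$ for $t > t_0$, corresponding precisely to $c$ and $d$. The region enclosed between the two curves is a topological bigon $B_\eps$, embedded in $M$, whose only corners are at $c$ and $d$ and are both convex.

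Second, by the combinatorial characterization recalled in Subsection \ref{2.2}, a rigid holomorphic strip in a $2$-dimensional symplectic manifold is the same datum as an immersion with convex corners. Thus, after holomorphic reparametrization, $B_\eps$ defines a rigid strip; its orientation $c \to d$ is dictated by the action inequality $\ell(d) > \ell(c)$ together with the corner convention of Fig. \ref{strip_or}. Moreover, by the preliminary remark that the Hamiltonian can be perturbed so that each bifurcation is localized in an arbitrarily small neighborhood of its non-transverse moment, we may assume that $B_\eps \subseteq M \setminus \{h\}$, so that $(B_\eps,h)=0$ and this strip contributes $1$ to $\langle \del_1 c, d \rangle$.

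It remains to prove uniqueness, which is the main analytic obstacle. By Stokes' theorem and the exactness of the Lagrangians, every strip $S$ from $c$ to $d$ has the same symplectic area
$$\int_S \omega = \ell(d) - \ell(c),$$
which tends to $0$ as $\eps \to 0$. The monotonicity inequality for $J_1$-holomorphic maps with boundary on the fixed compact Lagrangian configuration $L \cup L_{t_0+\eps}$ then guarantees that, for $\eps$ small enough, any such strip is entirely contained in an arbitrarily small neighborhood $U$ of $p$. Inside $U$ the local model admits only the bigon $B_\eps$, giving both uniqueness and the equality $\langle \del_1 c, d \rangle = 1$. The subtle point is really this step: a priori a strip from $c$ to $d$ could wind around $M$ in a topologically non-trivial way, and Stokes alone only constrains its area, not its diameter; it is the monotonicity/isoperimetric inequality for pseudo-holomorphic curves that converts the vanishing of the area into genuine localization near the birth point, where the picture becomes combinatorial.
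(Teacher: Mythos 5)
Your proof follows essentially the same strategy as the paper: identify the local bigon $B_\eps$ as the unique surviving strip, then rule out any other strip from $c$ to $d$ by observing that its symplectic area $\ell(d)-\ell(c)$ tends to $0$ while any strip escaping a fixed neighborhood $U$ of the tangency point has area bounded below. The one genuine difference is a matter of rigor in the last step: the paper asserts that a strip not contained in $U$ has area bounded below by some $C>0$ (implicitly because the geometry outside $U$ is frozen under the localized perturbation of $H$), while you invoke the monotonicity/isoperimetric inequality for $J$-holomorphic curves with Lagrangian boundary to convert small area into localization near $p$. Your version makes explicit the analytic input the paper leaves tacit, which is a modest improvement; the identification of the bigon as a rigid strip via the convex-corner criterion, and the observation that it can be kept away from $h$, match the paper's treatment.
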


\begin{proof}
At $t=t_0$, $L$ and $L_t$ are tangent in one point $e$ of action $l$, such that $\ell_t(c),\ell_t(d) \xrightarrow[t\rightarrow t_0]{} l$.
We can perturb $H$ around $t_0$ so that only a neighborhood $U$ of $e$ is changed over time. The only strip that stays in $U$ is $S$, so any other strip has an area greater than some constant $C>0$ during this process. However, $\ell_t(d)-\ell_t(c) \xrightarrow[t\rightarrow t_0]{} 0$, and $\ell_t(d)-\ell_t(c)$ is the area at $t$ of any strip joining $c$ to $d$. Therefore, there cannot be a strip joining $c$ to $d$ other than $S$.
\end{proof}

\begin{proposition}\label{prop3}
The strips joining two higher-action generators $b_k$ and $b_l$, and those joining two lower-action generators $a_i$ and $a_j$ are not disturbed by the birth. More precisely, we have
\begin{equation}\label{eq5}
    \forall k\in \{1,\dots,m\}, \quad \del_0 b_k=\del_1b_k.
\end{equation}
\begin{equation}\label{eq6}
    \forall i,j \in \{1,\dots,n\}, \quad \langle \del_1 a_i,a_j \rangle = \langle \del_0 a_i,a_j \rangle.
\end{equation}
\end{proposition}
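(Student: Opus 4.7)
The plan is to exploit the locality of the birth bifurcation: it takes place entirely inside a small neighborhood $U$ of the tangency point $e$. For $t\in[t_0-\eps,t_0+\eps]$, outside $U$ the Lagrangians $L$ and $L_t$ depend smoothly on $t$ and remain transverse, so the intersection points $b_1,\dots,b_m$ and $a_1,\dots,a_n$ vary smoothly and can be canonically identified. One may moreover choose $U$ so that $h\notin U$, so the algebraic intersection number of any strip with $h$ is controlled by what happens outside the bifurcation region.

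For Equation \eqref{eq5}, since $\ell(c),\ell(d)\le \ell(b_k)$, the strict action-increase of $\del_h$ forces every term in $\del_1 b_k$ to land in some $b_l$ with $\ell(b_l)>\ell(b_k)$. It thus suffices to show that for each pair $b_k,b_l$ the count polynomial $N_t(b_k,b_l)\in\Z[T]$ is constant across $t_0$. Consider the parametric moduli space of index-$1$ strips from $b_k$ to $b_l$ over $t\in[t_0-\eps,t_0+\eps]$. By Gromov compactness---valid in our exact setting, with no bubbling---the count can change only through strip breakings at intermediate generators (which come in $\del^2=0$-pairs and can be ruled out in the open interval by shrinking $\eps$ via the standard genericity argument) or through the appearance or disappearance of strips in a degenerate limit at $t_0$. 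The second mode is the one to rule out.

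The key observation, valid because in dimension $2$ every rigid strip is an immersion with convex corners (\cite{combinatorial}, Lemma 12.3), is the following: if the image of a strip from $b_k$ to $b_l$ enters $U$ with boundary arcs on both $L$ and $L_{t_0+\eps}$ bounding a common subregion of $U$, then these arcs must meet at an intersection point of $L$ and $L_{t_0+\eps}$ inside $U$, necessarily $c$ or $d$, which would be a convex corner of the strip---contradicting the assumption that the only corners are $b_k,b_l$. Hence the strip's image in $U$ is bounded by arcs on only one of $L$ or $L_t$ (or avoids $U$ altogether), and such a configuration deforms smoothly through the bifurcation, preserving the number of passes through $h$. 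Since the area $\ell(b_l)-\ell(b_k)>0$ is bounded below, no strip can emerge from a degenerate limit where the image shrinks near $e$ either. This yields Equation \eqref{eq5}.

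For Equation \eqref{eq6}, the same reasoning applies: the strips counted by $\langle\del_1 a_i,a_j\rangle$ have both corners among the $a$'s, which lie outside $U$ and depend smoothly on $t$, so the identical combinatorial dichotomy in $U$ shows these strips persist bijectively through $t_0$ with the same intersection multiplicity with $h$. The main obstacle is to make rigorous the combinatorial claim that a strip whose corners are all outside $U$ cannot have boundary arcs on both $L$ and $L_{t_0+\eps}$ bounding a common subregion of $U$ without acquiring a corner at $c$ or $d$. This should follow from a direct inspection of the local model in $U$, namely two smooth arcs that are tangent at $e$ for $t=t_0$ and transversally cross at $c,d$ for $t>t_0$.
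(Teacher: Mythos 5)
Your proof follows essentially the same route as the paper's: both arguments localize the bifurcation to a small neighborhood $U$ of the tangency point $e$ and argue that a rigid strip whose corners are away from $c,d$ cannot interact with the birth, so the only strips that appear or disappear are those with a corner at $c$ or $d$, i.e.\ strips from some $a_i$ to some $b_k$. The paper phrases this constructively (``a strip that disappears passes through the green region and breaks into a piece ending at $d$ and a piece starting at $c$'') and defers the justification to a figure; you phrase it as a ruling-out argument via the parametric moduli space over $t\in[t_0-\eps,t_0+\eps]$ and the immersion-with-convex-corners characterization, which is a slightly more systematic packaging, but the crucial combinatorial step is in both cases deferred to an informal inspection of the local model. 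One phrasing imprecision worth noting: your claim that boundary arcs on $L$ and on $L_{t_0+\eps}$ bounding a common subregion of $U$ ``must meet at an intersection point'' is not literally true, since such a subregion can also be bounded by arcs of $\partial U$ with the two boundary arcs remaining disjoint (a strip simply passing through $U$ away from $e$); what you really need is the weaker and correct assertion that a strip whose image covers a neighborhood of $e$ with boundary on both Lagrangians near $e$ must, in the post-birth picture, acquire a convex corner at $c$ or $d$, while strips that stay away from $e$ deform smoothly with unchanged $(S,h)$. Your preliminary reduction of \eqref{eq5} via action monotonicity (showing $\del_1 b_k$ lands in the $b_l$-span) is a nice observation that the paper leaves implicit, and your remark that one should take $h\notin U$ correctly handles the intersection multiplicity.
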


\begin{proof}
An old strip that disappears after the birth had to pass through the green part in Fig. \ref{fig0}. Thus, it gives birth to two new strips: one of them begins on $c$, and thus ends on some generator $b_k$, and the other one ends on $d$, and thus begins on some point $a_i$. 

Therefore, all the strips that disappeared were those joining some $a_i$ to some $b_k$, and those that appear must have endpoints or starting points on $c$ or $d$. This proves the proposition, whose the two identities are direct consequences.
\end{proof}

\begin{figure}[ht]
\begin{center}
\includegraphics[scale = .25] {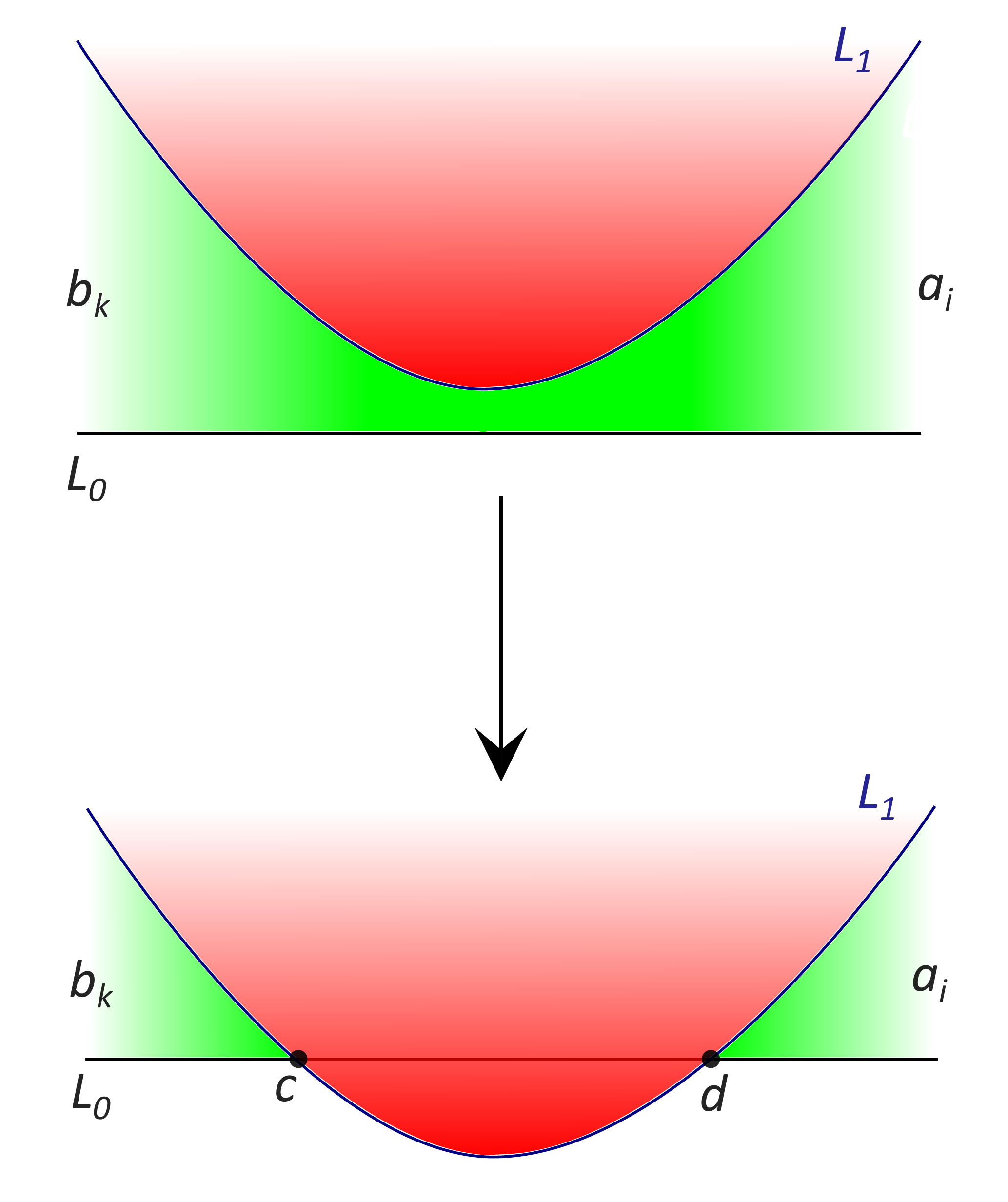}
\end{center}
\caption{The only old strips that break are the ones joining $a_i$'s to $b_k$'s}
\label{fig0}
\end{figure}

\subsection{The chain-isomorphism $H$}

Let us define the linear map 
$$H \colon D(t_0-\eps) \oplus \Z[T] \langle c,d \rangle \longrightarrow  D(t_0+\eps)$$
which satisfies:
\begin{itemize}
	\item for $1\leq k\leq m$, $H(b_k)=b_k$,
	\item $H(c)=c$ and $H(d)=\del_1 c$,
	\item finally, for $1 \leq i \leq n$, $H(a_i)=a_i + \langle \del_1 a_i,d \rangle c.$
\end{itemize}

Setting $\del_0 c \coloneqq d$ and $\del_0 d \coloneqq 0$ endows $D_0 \coloneqq D(t_0-\eps) \oplus \Z[T] \langle c,d \rangle$ with a structure of filtered chain-complex. Denote $D_1 \coloneqq D(t_0+\eps)$.
The function $H$ is the subject of the following proposition:

\begin{proposition}\label{chain map H}
$H$ is a chain-isomorphism from $(D_0,\del_0)$ to $(D_1,\del_1)$.
\end{proposition}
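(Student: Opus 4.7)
The plan is to verify the two defining properties of $H$ separately: bijectivity as a $\Z[T]$-linear map, and commutation with the differentials. For bijectivity I would order the canonical basis by decreasing action, namely $b_m,\dots,b_1,d,c,a_1,\dots,a_n$, and use the fact that $\del_1$ is strictly action-increasing together with Lemma \ref{lemma3} to write $\del_1 c=d+\sum_k \alpha_k b_k$ with $\alpha_k\in\Z[T]$. The matrix of $H$ in this ordering is then unipotent upper-triangular by direct inspection (the columns of $b_k$ and $c$ are standard basis vectors; the column of $d$ carries a diagonal $1$ and the $\alpha_k$'s above; the column of $a_i$ carries a diagonal $1$ and the single entry $\langle\del_1 a_i,d\rangle$ in the row of $c$ above), so $H$ is automatically a $\Z[T]$-module isomorphism.

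For the chain-map property $H\del_0=\del_1 H$ I would check each type of generator. On $c$ the equality is tautological: $H(\del_0 c)=H(d)=\del_1 c=\del_1 H(c)$. On $d$ both sides vanish, the right-hand side because $\del_1^2 c=0$. On a high-action generator $b_k$ one combines Proposition \ref{prop3} ($\del_0 b_k=\del_1 b_k$) with the observation that $\del_1 b_k$ lives in the $\Z[T]$-span of higher-action $b_j$'s, on which $H$ acts as the identity. The only substantive case is $a_i$: expanding $\del_1 H(a_i)$ and $H(\del_0 a_i)$, substituting the expression for $\del_1 c$ above, and cancelling the two $d$-contributions modulo $2$, the equality reduces to two scalar identities, one for the coefficient of $c$ and one for each coefficient of $b_k$.

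The $c$-identity will drop out of $\del_1^2 a_i=0$ by reading off the coefficient of $d$: by the action ordering, $d$ can only be produced by applying $\del_1$ to some $a_j$ or to $c$, so the vanishing of the $d$-coefficient of $\del_1^2 a_i$ rearranges (after invoking Proposition \ref{prop3} to identify $\langle\del_1 a_i,a_j\rangle=\langle\del_0 a_i,a_j\rangle$) to exactly the desired relation. The $b_k$-identity is the geometric heart of the argument and the step I expect to be the main obstacle. It should follow from the bifurcation analysis sketched around Figure \ref{fig0}: an old strip $S\colon a_i\to b_k$ either survives past $t_0$ and contributes to $\langle\del_1 a_i,b_k\rangle$, or degenerates into a broken pair $(S_1,S_2)$ with $S_1\colon a_i\to d$ and $S_2\colon c\to b_k$ glued at the birth point. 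A standard $1$-parameter compactness-and-gluing argument applied to the index-$2$ family of strips from $a_i$ to $b_k$ as $t$ crosses $t_0$ should produce a bijection between lost old strips and such pairs; combined with the additivity $(S,h)=(S_1,h)+(S_2,h)$ of the hole intersection under breaking, this matches the $T$-weighted counts and yields the required identity, completing the verification.
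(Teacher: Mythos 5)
Your proposal follows essentially the same route as the paper. The unipotent upper-triangular matrix argument for bijectivity (using Lemma \ref{lemma3} and the action ordering), the trivial checks on $b_k$, $c$, $d$, the derivation of the $c$-coefficient identity from $\del_1^2 a_i=0$ by reading off the $d$-component (this is the paper's Proposition \ref{prop4}), and the geometric bijection between broken index-$1$ strips $a_i\to d$, $c\to b_k$ and disappearing strips $a_i\to b_k$ with additivity of $(\cdot,h)$ (this is Proposition \ref{lastprop}) are precisely the ingredients of the paper's proof, assembled in the same order. One small imprecision: when you say the $a_i$-case reduces to \emph{two} scalar identities (coefficients of $c$ and of the $b_k$'s), you have implicitly dropped the $a_j$-coefficients; these require $\langle\del_1 a_i,a_j\rangle=\langle\del_0 a_i,a_j\rangle$, which is again Proposition \ref{prop3}, so no real gap, but it should be stated.
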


Lemma \ref{lemma3} ensures that the matrix representing $H$ in the basis $(b_m,\dots,b_1,$ $d, c, a_1,\dots,a_n)$ is upper-triangular with diagonal coefficients 1, hence invertible.

In addition, we need to verify that the following equation holds:
\begin{equation}\label{chainiso}
	\del_1 \circ H = H \circ \del_0.
\end{equation}

At first glance, \eqref{chainiso} is true on the generators $b_k, \ 1 \leq k \leq m$ and for $c$ and $d$.
Proving it for $a_i, \ 1 \leq i \leq n,$ requires some work. 

Our first step is the following:

\begin{proposition}\label{prop4}
Let $i\in\{1,\dots,n\}$. Then
\begin{equation*}
    \langle \del_1 a_i,c \rangle = \sum_{j=1}^{i-1} \langle \del_1 a_i,a_j\rangle \langle \del_1 a_j,d\rangle
\end{equation*}
\end{proposition}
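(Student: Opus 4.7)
The plan is to extract the identity from the algebraic identity $\del_1^2 a_i = 0$, by reading off the coefficient of $d$ in the chain $\del_1^2 a_i \in D_1$. Concretely, expanding yields
\begin{equation*}
    0 = \langle \del_1^2 a_i, d \rangle = \sum_{y} \langle \del_1 a_i, y \rangle \, \langle \del_1 y, d \rangle,
\end{equation*}
where the sum ranges over the generators $y$ of $D_1$. So the task becomes: identify which $y$ actually contribute, and apply Lemma \ref{lemma3} to the surviving terms.

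The key observation is that each factor is governed by the strict action-increase \eqref{stokes}: $\langle \del_1 a_i, y\rangle \neq 0$ forces $\ell(y) > \ell(a_i)$, while $\langle \del_1 y, d\rangle \neq 0$ forces $\ell(d) > \ell(y)$. Using the ordering $\ell(b_m) \geq \dots \geq \ell(b_1) \geq \ell(d) > \ell(c) \geq \ell(a_1) \geq \dots \geq \ell(a_n)$ (which, after the small generic perturbation mentioned in the preceding discussion, I may assume to be strict), the first condition restricts $y$ to $\{a_1,\dots,a_{i-1},c,d,b_1,\dots,b_m\}$, while the second restricts $y$ to $\{c,a_1,\dots,a_n\}$; the intersection is exactly $\{c, a_1, \ldots, a_{i-1}\}$. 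In particular, no $b_k$ contributes (since $\ell(b_k) \geq \ell(d)$ makes $\langle \del_1 b_k, d\rangle = 0$), and $y=d$ itself contributes nothing since $\langle \del_1 d, d\rangle = 0$.

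Plugging these surviving indices in, the identity $\del_1^2 a_i = 0$ reads
\begin{equation*}
    0 = \langle \del_1 a_i, c \rangle \, \langle \del_1 c, d \rangle + \sum_{j=1}^{i-1} \langle \del_1 a_i, a_j \rangle \, \langle \del_1 a_j, d \rangle.
\end{equation*}
By Lemma \ref{lemma3} the coefficient $\langle \del_1 c, d\rangle$ equals $1 \in \Z[T]$, and since we work in characteristic $2$, rearranging terms is the same as adding, which yields the desired formula.

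The only mildly delicate point is justifying that the ordering of action levels can be taken strict at $t_0 \pm \eps$, so that the combinatorics above is clean; this is handled by the small Hamiltonian perturbation already invoked in the subsection (splitting simultaneous crossings and separating coincident actions). No new geometric input about strips is needed beyond the two facts already established: strips are strictly action-increasing, and Lemma \ref{lemma3}.
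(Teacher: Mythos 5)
Your proof is correct and is in fact a simplification of the paper's own argument. Both proofs ultimately extract the same identity by projecting $\del_1^2 a_i = 0$ onto the generator $d$, but the paper takes a longer route: it first rewrites $\del_1 a_i$ in terms of $\del_0 a_i$ plus correction terms, applies $\del_1$, expands $\del_1\del_0 a_i$ using the same rewriting, simplifies via $\del_0^2 = 0$, and at the very end invokes Proposition \ref{prop3} to convert the resulting coefficients $\nu_j = \langle \del_0 a_i, a_j\rangle$ back into $\langle \del_1 a_i, a_j\rangle$. You bypass all of this by working intrinsically in $D_1$: the strict action-increase of $\del_1$ alone identifies which generators $y$ can contribute a nonzero term $\langle \del_1 a_i, y\rangle\langle \del_1 y, d\rangle$ (namely $c$ and the $a_j$ with $j<i$), and Lemma \ref{lemma3} gives $\langle \del_1 c, d\rangle = 1$. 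In particular you avoid Proposition \ref{prop3} and the whole $\del_0$-machinery entirely. The paper's longer route does have a conceptual point --- it is tracking how the differential changes across the birth, which is the theme of the section --- but for the purposes of establishing this proposition your direct approach is cleaner. One small remark: the strict-ordering assumption is harmless but actually unnecessary, since in any degenerate case (e.g.\ $\ell(a_j) = \ell(a_i)$) the corresponding coefficient $\langle \del_1 a_i, a_j\rangle$ vanishes by the same action-increase argument, so the final formula is unaffected.
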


\begin{proof}

First, using Proposition \ref{prop3} we see that 
\begin{equation}\label{eq3}
    \del_1 a_i = \del_0 a_i + \langle \del_1 a_i,c \rangle c + \langle \del_1 a_i,d \rangle d + \sum_{k=1}^m \langle \del_1 a_i,b_k \rangle b_k.
\end{equation}
We then apply $\del_1$ to Equation \eqref{eq3} and use $\del_1^2=0$:
\begin{equation}\label{eq4}
    0 = \del_1 \del_0 a_i + \langle \del_1 a_i,c \rangle \del_1 c + \langle \del_1 a_i,d \rangle \del_1 d + \sum_{k=1}^m \langle \del_1 a_i,b_k \rangle \del_1 b_k.
\end{equation}

For $1 \leq j < i$ and $1 \leq k \leq m$, we define 
\begin{align*}
    & \lambda_j\coloneqq \langle \del_1 a_j,c \rangle, \\
    & \mu_j\coloneqq \langle \del_1 a_j,d \rangle, \\
    & \xi_{j,k}\coloneqq \langle \del_1 a_j, b_k \rangle, \\
    & \nu_j\coloneqq \langle  \del_0 a_i, a_j \rangle.
\end{align*}

Using $\del_0^2=0$ and Proposition \ref{prop3} again, the first term of Equation \eqref{eq4} may then be simplified:
\begin{align*}
    \del_1 \del_0 a_i 
    & = \sum_{j=1}^{i-1} \langle \del_0 a_i,a_j\rangle \del_1 a_j + \sum_{k=1}^{m} \langle \del_0 a_i,b_k\rangle \del_1 b_k \\
    & = \sum_{j=1}^{i-1} \nu_j \left( \del_0 a_j + \lambda_j c + \mu_j d + \sum_{k=1}^m \xi_{j,k} b_k \right) + \sum_{k=1}^{m} \langle \del_0 a_i,b_k\rangle \del_0 b_k \\
    & = \del_0 ^2 a_i + \sum_{j=1}^{i-1} \nu_j \left( \lambda_j c + \mu_j d + \sum_{k=1}^m \xi_{j,k} b_k \right) \\
    & = \sum_{j=1}^{i-1} \nu_j \left( \lambda_j c + \mu_j d + \sum_{k=1}^m \xi_{j,k} b_k \right).
\end{align*}

Equation \eqref{eq4} then becomes:
\begin{align*}
    0 = & \sum_{j=1}^{i-1} \nu_j \left( \lambda_j c + \mu_j d + \sum_{k=1}^m \xi_{j,k} b_k \right) \\
    & + \langle \del_1 a_i,c \rangle \del_1 c + \langle \del_1 a_i,d \rangle \del_1 d + \sum_{k=1}^m \langle \del_1 a_i,b_k \rangle \del_1 b_k.
\end{align*}

Using Lemma \ref{lemma3}, the projection on $\Z[T] \cdot d$ of this equation gives
\begin{equation*}
    0 = \sum_{j=1}^{i-1} \nu_j \mu_j + \langle \del_1 a_i,c \rangle. 
\end{equation*}

Replacing the $\nu_j$'s and the $\mu_j$'s, and using Equation \eqref{eq6} yields
\begin{equation*}
    \langle \del_1 a_i,c \rangle = \sum_{j=1}^{i-1} \langle \del_1 a_i,a_j\rangle \langle \del_1 a_j,d\rangle,
\end{equation*}
which proves the proposition.
\end{proof}

In particular, $\langle \del_1 a_1,c \rangle =0$.

\bigskip
This proposition tells us that the strips joining $a_i$ to $c$ do not matter, since the new coefficients get canceled out by the definition of $H$. Indeed,
\begin{align*}
	\langle H(\del_0 a_i),c \rangle
	& = \sum_{j<i} \langle \del_0 a_i,a_j \rangle \langle H(a_j),c\rangle \\
	& = \sum_{j<i} \langle \del_0 a_i,a_j \rangle \langle \del_1 a_j,d\rangle,
\end{align*}
so by Proposition \ref{prop4},
\begin{equation}\label{proj_c}
	\langle H(\del_0 a_i),c \rangle = \langle \del_1 a_i,c \rangle.
\end{equation}

The second step is the following.

\begin{proposition} \label{lastprop}

Let us denote $p\colon D(t_0+\eps) \rightarrow \Z[T]\langle b_m,\dots,b_1,d \rangle$ the canonical projection. Then, $p(\del_1 c) = \del_1 c$ and 
\begin{equation}
    p(\del_1 a_i) = p(\del_0 a_i) + \langle \del_1 a_i,d \rangle \del_1 c.
\end{equation}
\end{proposition}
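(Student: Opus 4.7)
The plan is to prove the two claims of Proposition~\ref{lastprop} separately: the first is immediate from the action ordering, while the second reduces to a $b_k$-coefficient identity that I would establish via the geometric birth picture of Fig.~\ref{fig0}. The identity $p(\del_1 c) = \del_1 c$ follows at once from the strict action-increasing property \eqref{stokes}: $\del_1 c$ is supported on generators of action strictly greater than $\ell(c)$, and these are exactly $d, b_1, \ldots, b_m$ in the chosen ordering, which form the target of $p$.

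For the second claim, I would use Equation~\eqref{eq6} to note that the $a_j$-coefficients of $\del_0 a_i$ and $\del_1 a_i$ coincide. Expanding $\del_1 a_i$ in the full basis, applying $p$ (which kills $c$ and every $a_j$), and invoking Lemma~\ref{lemma3} to write $\del_1 c = d + \sum_k \langle \del_1 c, b_k\rangle b_k$, the claim reduces to the mod-$2$ identity in $\Z[T]$
\begin{equation*}
\langle \del_1 a_i, b_k\rangle + \langle \del_0 a_i, b_k\rangle = \langle \del_1 a_i, d\rangle \langle \del_1 c, b_k\rangle
\end{equation*}
for every $k \in \{1,\dots,m\}$; the $d$-coefficients on both sides already agree, both being equal to $\langle \del_1 a_i, d\rangle$.

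I would prove this $b_k$-identity by a bijection argument in the spirit of Proposition~\ref{prop3}. First, no strip $a_i \to b_k$ is genuinely born at $t_0$: any such hypothetical new strip would need its boundary to traverse the newborn lens, yet its corners are confined to $a_i$ and $b_k$, forbidding passage through $c$ or $d$. Hence the mod-$2$ symmetric difference of $\mathcal{M}_0(a_i,b_k)$ and $\mathcal{M}_1(a_i,b_k)$ consists exactly of the disappearing strips at $t_0 - \eps$. Second, each disappearing strip $S_0$ degenerates at $t_0$ into a broken configuration meeting at the tangency point $e$, and for $t_0 + \eps$ this separates into a unique pair $(S_1', S_2') \in \mathcal{M}_1(a_i, d) \times \mathcal{M}_1(c, b_k)$, the orientation convention of Fig.~\ref{strip_or} fixing which piece ends at $d$ and which begins at $c$; conversely, any such pair glues uniquely across the lens back into a disappearing strip. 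Third, since $h$ is fixed while the birth region can be shrunk arbitrarily, we may assume $h$ lies outside it, so intersection counts distribute additively: $(S_0, h) = (S_1', h) + (S_2', h)$. Summing the weights $T^{(S,h)}$ over the bijection yields the identity.

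The main obstacle will be the breaking-and-gluing step of the bijection. Thanks to the combinatorial description of strips in dimension $2$ as orientation-preserving immersions with convex corners (\cite{combinatorial}, Lemma~12.3), this is substantially more tractable than the general Gromov compactness plus gluing machinery, but one still must verify that the broken configurations at $t_0$ always have the expected shape and that the inverse gluing is unambiguous.
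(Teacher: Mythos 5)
Your proof is correct and takes essentially the same approach as the paper: both reduce the second identity to the coefficient relation $\langle \del_1 a_i, b_k\rangle - \langle \del_0 a_i, b_k\rangle = \langle \del_1 a_i, d\rangle \langle \del_1 c, b_k\rangle$ and establish it via the same bijection between $\mathcal{M}_{t_0+\eps}(a_i,d)\times\mathcal{M}_{t_0+\eps}(c,b_k)$ and the set of disappearing strips from $a_i$ to $b_k$, with additivity of intersection numbers with $h$ over the breaking. The only minor remarks are that your invocation of Equation~\eqref{eq6} is superfluous since the projection $p$ already kills every $a_j$, and that you supply slightly more justification than the paper for the additivity $(S_0,h)=(S_1,h)+(S_2,h)$.
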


\begin{proof}
At time $t' \coloneqq t_0+\eps$, for each strip $S_1$ joining $a_i$ to $d$, and for each strip $S_2$ joining $c$ to $b_k$, there was a strip $S_0$ at $t \coloneqq t_0-\eps$, which disappears at $t=t_0$, joining $a_i$ to $b_k$, which satisfies
\begin{equation*}
    (S_0,h)=(S_1,h)+(S_2,h).
\end{equation*}
Here, $(S,h)$ denotes the algebraic intersection number, or the number of preimages of $h$ by $S$, as discussed at the end of Subsection \ref{2.2}.

%Moreover, the functions $\mathcal{M}_{t'}(a_i,d) \times \mathcal{M}_{t'}(c,b_k) \ni (S_1,S_2) \mapsto S_0 \in \mathcal{M}_t(a_i,b_k)$ are bijective.

Let $\tilde{\mathcal{M}}_t(a_i,b_k)$ be the set of the strips that contribute to the difference $\langle \del_1 a_i,b_k\rangle - \langle \del_0 a_i,b_k \rangle$:
$$\langle \del_1 a_i,b_k\rangle - \langle \del_0 a_i,b_k \rangle = \sum_{S_0 \in \tilde{\mathcal{M}}_t(a_i,b_k)} T^{(S_0,h)}.$$

The function $(S_1,S_2) \mapsto S_0$ described above gives rise to a bijection
$$ \mathcal{M}_{t'}(a_i,d) \times \mathcal{M}_{t'}(c,b_k) \overset{\sim}{\longrightarrow} \tilde{\mathcal{M}}_t(a_i,b_k).$$
Therefore, we see that for all $k\in\{1,\dots,m\}$, we have
\begin{align*}
\langle \del_1 a_i,b_k\rangle - \langle \del_0 a_i,b_k \rangle
&= \sum_{S_0 \in \tilde{\mathcal{M}}_t(a_i,b_k)} T^{(S_0,h)}  \\
&= \sum_{ (S_1,S_2) \in \mathcal{M}_{t'}(a_i,d) \times \mathcal{M}_{t'}(c,b_k) } T^{(S_1,h)+(S_2,h)} \\
&= \left( \sum_{S_1\in \mathcal{M}_{t'}(a_i,d)} T^{(S_1,h)} \right) \left( \sum_{S_2\in \mathcal{M}_{t'}(c,b_k)} T^{(S_2,h)} \right) \\
&= \langle \del_1 a_i,d \rangle \langle \del_1 c,b_k \rangle.
\end{align*}

By Lemma \ref{lemma3}, the coefficient in front of $d$ of $\del_0 a_i + \langle \del_1 a_i, d \rangle \del_1 c$ is exactly $\langle \del_1 a_i, d \rangle$, so combining it with the result of the equation above proves the proposition. This proof is summed up in Fig. \ref{fig2}.
\end{proof}

\begin{figure}[ht]  
\begin{center}
\includegraphics[scale = .25] {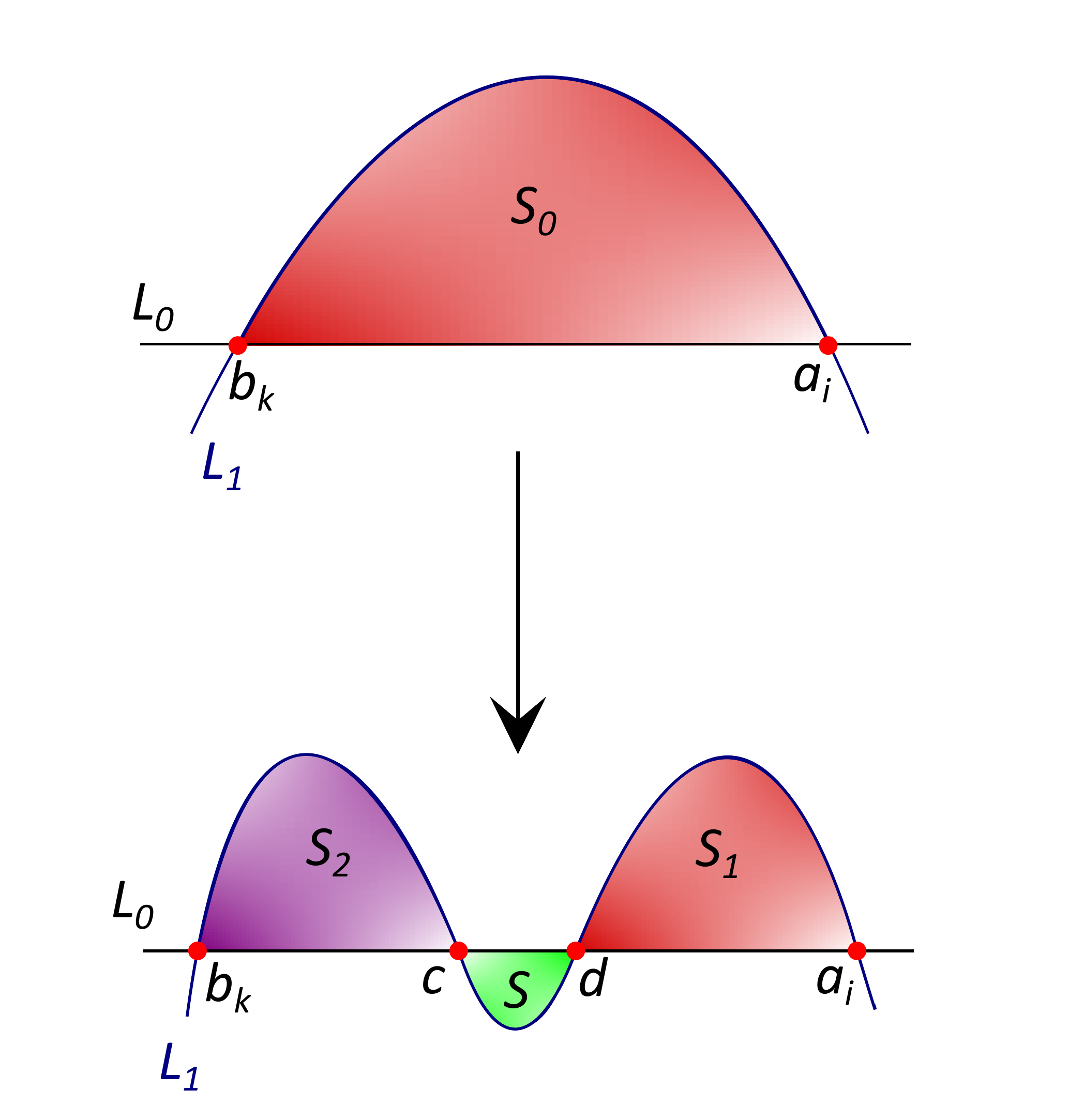}
\end{center}
\caption{The strip joining $a_i$ to $b_k$ breaks down into three parts}
\label{fig2}  
\end{figure}

Let us then prove Proposition \ref{chain map H}:

\begin{proof}

As explained in the discussion after the statement of the proposition, we only have to show that 
\begin{equation}\label{handle-slide}
    \del_1 H(a_i)=H(\del_0 a_i).
\end{equation}
This will be a consequence of the definition of $H$ and the following equation:
\begin{equation}\label{last}
	\del_1 a_i = H(\del_0 a_i) + \langle \del_1 a_i,d \rangle \del_1 c.
\end{equation}

Since $p(H(\del_0 a_i)))=p(\del_0 a_i)$, we already have by Proposition \ref{lastprop}
\begin{equation}\label{1st}
	p(\del_1 a_i) = p \big( H (\del_0 a_i) + \langle \del_1 a_i,d \rangle \del_1 c \big).
\end{equation}

We then only need to check if the projections on $c$ and on the $a_j$'s of both terms of \eqref{last} hold.

The projection on $a_j$ is given by Proposition \ref{prop3}:
\begin{equation}\label{2nd}
	\langle \del_1 a_i,a_j \rangle = \langle \del_0 a_i, a_j \rangle = \langle H(\del_0 a_i), a_j \rangle + \langle \del_1 a_i,d \rangle \langle \del_1 c, a_j \rangle,
\end{equation}
because $\langle \del_1 c, a_j \rangle = 0$.

Finally, Equation \eqref{proj_c} in the discussion after Proposition \ref{prop4} shows that 
\begin{equation}\label{3rd}		
	\langle \del_1 a_i,c \rangle = \langle H(\del_0 a_i), c \rangle + \langle \del_1 a_i,d \rangle \langle \del_1 c, c \rangle,
\end{equation}
where $\langle \del_1 c, c \rangle = 0$.

By the three equations \eqref{1st}, \eqref{2nd} and \eqref{3rd}, we see that \eqref{last} holds. 
Therefore, we have proved Proposition \ref{chain map H}.
\end{proof}

\subsection{Proof of Theorem \ref{thm}}\label{sub}

We are now able to prove Theorem \ref{thm}:

\begin{proof}
First of all, it is clear that $D(t)$ defines a piecewise continuous complex away from the birth and death moments. By genericity, we may assume that only a finite number of birth/death moments occur, and that only two generators appear/disappear simultaneously. 

Let $t_1<\dots<t_n$ be the birth/death moments. Denote $t_0\coloneqq 0$ and $t_{n+1}\coloneqq 1$. We will use the time interval
$$\Delta t \coloneqq \inf_{0 \leq i \leq n} t_{i+1} - t_i.$$

Let $1 \leq i \leq n$; we may assume that there is a birth at $t=t_i$. We are going to define a piecewise continuous family $\tilde{D}(t)$ such that $\tilde{D}(t) = D(t)$ when $t<t_i - \frac{\Delta t}{3}$ or $t>t_i + \frac{\Delta t}{3}.$
 
Denote $c(t)$ and $d(t)$ the two new-born generators of $D(t)$ at time $t>t_i$, with $\ell_t (c(t)) < \ell_t(d(t))$.

Let us define the family of filtered complexes
$$S \coloneqq \Z[T] \langle c,d \rangle,$$
endowed with the differential $\del$ that satisfies
$\del c = d$ and $\del d =0$, and with constant action $l$ such that $(c,d)$ is a compatible basis and 
\begin{comment}
\begin{align*}
	& l_t (c)=\ell_{2t_i - t} \left( c \left( 2t_i-t \right) \right), \\ 
	& l_t (d)=\ell_{2t_i - t} \left( d \left( 2t_i - t \right) \right).
\end{align*}
\end{comment}
$$l(c)=l(d) \coloneqq \lim_{s \to t_i^+} \ell_s (c) = \lim_{s \to t_i^+} \ell_s (d) .$$

Now, let us define the interval
$$I_i \coloneqq \left( t_i - \frac{\Delta t}{4} , t_i \right],$$

and then the family of complexes
$$(\tilde{D}(t),\tilde{\del}_h(t), \tilde{\ell}_t) \coloneqq 
\begin{cases}
	(D(t),\del_h(t),\ell_t) & \text{ if } t \not\in I_i, \\
	(D(t_i-\frac{\Delta t}{4}),\del_h(t_i-\frac{\Delta t}{4}),\ell_t) \oplus (S,\del,l)& \text{ if } t \in I_i, \\
\end{cases}$$
for $t \in [t_i-\frac{\Delta t}{3},t_i+\frac{\Delta t}{3}]$.

Unlike $D(t)$, the family $\tilde{D}(t)$ defines a true piecewise continuous family in the sense of Definition \ref{pwc}. Indeed:
\begin{itemize}
	\item $D(t_i)$ is not a filtered chain complex, but $D(t_i-\frac{\Delta t}{4})$ is, and it is chain-isomorphic to every $D(t)$ with $t\in \mathrm{int} \ I_i$, so the definition makes sense for all $t$, even at the bifurcation times $t_i$ and $t_i-\frac{\Delta t}{4}$;
	\item for $t<t_i - \frac{\Delta t}{4}$, $t>t_i$ or $t_i - \frac{\Delta t}{4} < t < t_i$, the complexes are chain-isomorphic with continuous action, as seen before;
	\item at time $t=t_i - \frac{\Delta t}{4}$, the complex $D(t)$ obviously undergoes a birth;
	\item at time $t=t_i$, the complex $D(t)$ undergoes a handle-slide, via the map $H$ which was proven to be suitable in Proposition \eqref{chain map H}.
\end{itemize} 

Therefore, the theorem is now proved for the family $\tilde{D}(t)$.
\end{proof}

\subsection{Proof of Theorem \ref{thm1}} \label{sec}

In order to prove Theorem \ref{thm1}, we have to make the link between the complexes $C_\beta(t)$ and $D(t)$. This link is described by the following lemma.

\begin{lemma}\label{ev}
Let $\beta \in\{0,1\}$. The evaluation map 
$$\mathrm{ev}_\beta \colon D(t) \longrightarrow C_\beta(t)$$
sending $T$ to $\beta$ is a chain-map that is action-preserving.

Moreover, if $D(t)$ has a Barannikov basis $B$, then $\mathrm{ev}_\beta(B)$ is a Barannikov basis of $C_\beta(t)$. In this case, we have in addition
$$\mathcal{B}(D(t))=\mathcal{B}(C_\beta(t)).$$
\end{lemma}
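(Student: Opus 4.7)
The plan is to verify chain-map, action-preservation, and Barannikov-basis compatibility in turn, exploiting the direct-sum decomposition that a Barannikov basis provides.

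First, for the chain-map and action-preservation properties: on a canonical generator $x\in L\cap L_t$, the differential $\del_h(t) x=\sum_y N_t(x,y) y$ evaluates under $\mathrm{ev}_\beta$ to $\sum_y \bigl(\sum_{S\in\mathcal{M}_t^1(x,y)}\beta^{(S,h)}\bigr) y$. For $\beta=1$ this recovers the total count $n_t^1(x,y)$, and for $\beta=0$ it picks out exactly the strips with $(S,h)=0$, which are the strips whose images avoid $h$, i.e.\ the strips contained in $M'=M\setminus\{h\}$, yielding $n_t^0(x,y)$. Hence $\mathrm{ev}_\beta$ intertwines the differentials on generators and therefore, after $\Z$-linear extension, on all of $D(t)$. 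Since $\mathrm{ev}_\beta$ fixes each canonical generator and only collapses polynomial coefficients to scalars, the support of $\mathrm{ev}_\beta(c)$ in $L\cap L_t$ is contained in that of $c$, so the inf-formula for $\ell_t$ immediately gives $\ell_t(\mathrm{ev}_\beta(c))\geq \ell_t(c)$.

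Now suppose $D(t)$ admits a Barannikov basis $B=(a_1,\dots,a_n,b_1,\dots,b_n,c_1,\dots,c_m)$, yielding the decomposition
$$D(t)=\bigoplus_{i=1}^n\bigl(\Z[T] a_i\oplus\Z[T] b_i\bigr)\oplus\bigoplus_{j=1}^m\Z[T] c_j.$$
Since $\mathrm{ev}_\beta$ is $\Z[T]$-linear with kernel $(T-\beta) D(t)$ and $\Z[T]/(T-\beta)\cong\Z$, each rank-$1$ $\Z[T]$-summand reduces to a rank-$1$ $\Z$-summand, giving
$$C_\beta(t)=\bigoplus_{i=1}^n\bigl(\Z a'_i\oplus\Z b'_i\bigr)\oplus\bigoplus_{j=1}^m\Z c'_j,$$
where $a'_i,b'_i,c'_j$ denote the $\mathrm{ev}_\beta$-images. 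The chain-map property yields $\del a'_i=b'_i$, and the decomposition directly implies that the $b'_i$ span the boundaries and that $(b'_i,c'_j)$ spans the cycles of $C_\beta(t)$.

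The main obstacle—and the step I would treat most carefully—is showing that actions are \emph{genuinely preserved} (not merely not lowered) on basis elements: $\ell_t(v')=\ell_t(v)$ for every $v\in B$. Reusing the sup-reformulation of the quotient action from the proof of the preceding proposition, any preimage of $v'$ has the form $v+(T-\beta) Q$ with $Q\in D(t)$; expanding in the Barannikov decomposition, the coefficient of the summand containing $v$ reads $1+(T-\beta)R(T)$ for some $R\in\Z[T]$, which takes the value $1$ at $T=\beta$ and is therefore a nonzero polynomial. Compatibility of $B$ in $D(t)$ then forces $\ell_t(v+(T-\beta)Q)\leq\ell_t(v)$ for every such $Q$, giving $\ell_t(v')\leq\ell_t(v)$, which combined with the filtration inequality established above yields equality. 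Compatibility of $B'=\mathrm{ev}_\beta(B)$ now follows summand by summand, and since the bars $(\ell_t(a'_i),\ell_t(b'_i)]$ and $(-\infty,\ell_t(c'_j)]$ match those of $D(t)$, we conclude $\mathcal{B}(D(t))=\mathcal{B}(C_\beta(t))$.
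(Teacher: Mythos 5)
Your proof is correct and follows the same broad outline as the paper's, but you are noticeably more careful at the one place where the paper is genuinely terse: action-preservation. The paper's proof simply asserts that, because $\mathrm{ev}_\beta$ fixes each canonical generator, it ``obviously'' preserves action and hence compatible bases, and then verifies the Barannikov axioms for $\mathrm{ev}_\beta(B)$ exactly as you do. You correctly observe that filtration-preservation ($\ell_t(\mathrm{ev}_\beta(x))\geq\ell_t(x)$) is immediate but insufficient; for the barcodes to match one needs genuine equality on the Barannikov basis elements, which is the nontrivial content. Your mechanism for establishing equality — write $\ell_t(v')$ as the supremum of $\ell_t$ over preimages of $v'$, expand a preimage $v+(T-\beta)Q$ in the compatible basis $B$, observe the $v$-coefficient is $1+(T-\beta)R(T)$, nonzero since it evaluates to $1$, and invoke compatibility — is valid and works cleanly.

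One step you should make explicit rather than borrow implicitly: the identification $\ell_{C_\beta(t)}(v')=\sup_{\mathrm{ev}_\beta(w)=v'}\ell_t(w)$ is not the same statement as the sup-reformulation in the paper's uniqueness proposition. There, the sup formula \emph{defines} the action on an abstract quotient $\tilde{C}=C_T/T\,C_T$; here, $C_\beta(t)$ carries its own intrinsic Floer action and you are asserting it coincides with the quotient action of $D(t)/(T-\beta)D(t)$. This is true, but it needs a (short) argument: since $\mathrm{ev}_\beta$ fixes each canonical generator $x_i$, any $v'=\sum c_i x_i$ with $c_i\in\Z$ admits the preimage $w=\sum c_i x_i\in D(t)$ with $\ell_t(w)=\ell_{C_\beta(t)}(v')$, giving $\sup\geq\ell_{C_\beta(t)}(v')$, and the reverse inequality is the filtration-preservation you already proved. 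Filling this in closes the circle. One further small remark: the $\beta=0$ reinterpretation of $\mathrm{ev}_0$ as counting strips in $M'$ implicitly uses that $J_0=J_1|_{M'}$; the paper makes the same implicit assumption, so this is not a gap in your argument relative to the paper, but it is worth noting.
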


\begin{proof}
By definition of $\del_h(t)$, for each $x \in L \cap L_t$ we have
$$\mathrm{ev}_\beta (\del_h(t) x) = \del_\beta(t) x = \del_\beta(t) \mathrm{ev}_\beta x.$$
Since $\del_h(t)$ is $\Z[T]$-linear and $\mathrm{ev}_\beta \colon \Z[T] \rightarrow \Z$ is a ring homomorphism, this relation even holds for every $x\in D(t)$, so $\mathrm{ev}_\beta$ is a chain-map.
It obviously sends the canonical basis of $D(t)$ on the canonical basis of $C_\beta(t)$, so it preserves action.
It thus preserves compatible bases.

Let $B=(a_1(T),\dots,a_N(T),\del_h a_1(T),\dots,\del_h a_N(T),c_1(T),\dots,c_M(T))$ be a Barannikov basis of $D(t)$. Being a compatible basis, $\mathrm{ev}_\beta(B)$ is a compatible basis of $C_\beta(t)$. Moreover, we have
\begin{align*}
	& \forall i\in \{1,\dots,N\}, \quad \del_\beta \mathrm{ev}_\beta ( a_i(T)) = \mathrm{ev}_\beta ( \del_h a_i(T) ), \\
	& \forall j\in \{1,\dots,M\}, \quad \del_\beta \mathrm{ev}_\beta ( c_j(T)) = \mathrm{ev}_\beta ( \del_h c_j(T) ) = 0.
\end{align*}
Therefore, $\mathrm{ev}_\beta(B)$ is a Barannikov basis of $C_\beta(t)$.
\end{proof}

%\subsection{Proof of Proposition \ref{cor}}

Finally, we use Lemma \ref{ev} and Theorem \ref{thm} to prove Theorem \ref{thm1}, which is a direct consequence of the following result.:

\begin{proposition}\label{cor}
Suppose $D(0)$ is a standard complex. Then, besides in some small neighborhood of its bifurcation times, $D(t)$ is a family of standard complexes, and its barcode satisfies 
$$\mathcal{B}(D(t))=\mathcal{B}(C_1(t))=\mathcal{B}(C_0(t)).$$
\end{proposition}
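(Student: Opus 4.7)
The plan is to lift standardness from $D(0)$ to every non-bifurcation time using the piecewise continuous family $\tilde{D}(t)$ provided by Theorem \ref{thm}, and then to transport Barannikov bases along the evaluation map of Lemma \ref{ev} to compare barcodes. Since Theorem \ref{thm} provides an action-preserving chain-isomorphism $\tilde{D}(t) \overset{\sim}{\longrightarrow} D(t)$ outside small neighborhoods of $t_1, \dots, t_n$, it suffices to establish that $\tilde{D}(t)$ is standard throughout $[0,1]$ (or at the very least outside those neighborhoods).

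I would argue by induction on the bifurcation moments of $\tilde{D}(t)$, with the base case $\tilde{D}(0) = D(0)$ being standard by hypothesis. Within each open interval between consecutive bifurcations, the preferred isomorphism $\Phi_{t,t'}$ is a chain-iso sending compatible bases to compatible bases, so it maps any Barannikov basis of $\tilde{D}(t)$ to one of $\tilde{D}(t')$, keeping standardness constant on each piece. At each bifurcation, Definition \ref{pwc} furnishes a chain-iso sending compatible bases to compatible bases: $f_\eps$ for a birth or death, $H_i(\pm\eps)$ for a handle-slide. For a birth, appending the newly added pair $(c,d)$ of the summand $S = \Z[T]\langle c,d\rangle$ with $\del c = d$ to a Barannikov basis of the pre-bifurcation complex gives a Barannikov basis of the direct sum, which is then transported to a Barannikov basis of the post-bifurcation complex by $f_\eps$. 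Handle-slides are immediate since $H_i(\pm\eps)$ transports any Barannikov basis unchanged. Deaths are handled dually by reversing the time direction, converting them into births to which the previous argument applies. This closes the induction.

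Consequently, outside the bifurcation neighborhoods $D(t)$ is standard, so it admits a Barannikov basis
$$B(t) = (a_1,\dots,a_N, \del_h a_1, \dots, \del_h a_N, c_1,\dots,c_M).$$
Applying Lemma \ref{ev} for both $\beta = 0$ and $\beta = 1$, the image $\mathrm{ev}_\beta(B(t))$ is a Barannikov basis of $C_\beta(t)$, and because $\mathrm{ev}_\beta$ is action-preserving the action values of the paired and unpaired elements are unchanged. Reading the barcode off these values gives
$$\mathcal{B}(D(t)) = \mathcal{B}(C_0(t)) = \mathcal{B}(C_1(t)),$$
as claimed.

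The main obstacle is the death step in the induction, where one must verify that the complement of a two-dimensional acyclic summand inside a standard $\Z[T]$-complex is again standard; the cleanest way is to dualize the birth argument, using that reversing time in the piecewise continuous family of Theorem \ref{thm} turns any death into a birth, to which the earlier analysis applies verbatim. All other moves carry Barannikov bases across automatically and raise no subtlety.
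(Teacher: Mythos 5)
Your proposal follows the paper's argument essentially verbatim: induction over the bifurcation times of the family $\tilde{D}(t)$ from Theorem \ref{thm}, appending $(c,d)$ to a Barannikov basis at a birth, reversing time at a death, transporting a Barannikov basis by the chain-isomorphism $H$ at a handle-slide, and finally invoking Lemma \ref{ev} for both $\beta=0$ and $\beta=1$ to identify the three barcodes. The subtlety you flag about stripping an acyclic summand off a standard $\Z[T]$-complex in the death case is likewise dispatched by the paper via the same time-reversal remark, so your proposal matches the paper on this point as well.
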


\begin{proof}
Let $\tilde{D}(t)$ be a modified version of $D(t)$ given by Theorem \ref{thm}.
Let $0<t_1<\dots<t_n<1$ be the bifurcation times. 

Between two bifurcations, the complexes are canonically chain-isomorphic, so if $\tilde{D}(t)$ is standard for some $t\in (t_i,t_{i+1})$, then all the $\tilde{D}(t')$ for all other $t' \in (t_i,t_{i+1})$ are standard as well.

We use induction over $i$ to prove that all the complexes $\tilde{D}(t)$ are standard.
Suppose that for all $t \in [0,t_i), \ \tilde{D}(t)$ is a standard complex.\\
Let $B \coloneqq (a_1,\dots,a_N,\del_0 a_1,\dots \del_0 a_N, c_1,\dots,c_M)$ be the Barannikov basis of $\tilde{D}(t_i-\eps)$.

Suppose there is a birth at time $t_i$. Then, there is a filtered decomposition
$$\tilde{D}(t_i+\eps) = \tilde{D}(t_i-\eps) \oplus S,$$
and therefore, after a proper reordering $B \sqcup (c,d)$ is the Barannikov basis of $\tilde{D}(t_i+\eps)$.

If there is a death at time $t_i$, by reversing the course of time we end up in the above case.

Suppose there is a handle-slide at time $t_i$. Denote 
$$H \colon (\tilde{D}(t_i-\eps),\del_0) \longrightarrow (\tilde{D}(t_i+\eps), \del_1)$$
the chain-isomorphism that corresponds to the handle-slide. Thus, $H(B)$ is a Barannikov basis of $(\tilde{D}(t_i+\eps),\del_1)$. Indeed,
\begin{align*}
& \forall i \in \{1,\dots,N\}, \quad \del_1 H(a_i) = H( \del_0 a_i ), \\
& \forall j \in \{1,\dots,M\}, \quad \del_1 H(c_j) = H(\del_0 c_j) = 0,
\end{align*}
and this proves that for all $t \in [0,t_{i+1}), \ \tilde{D}(t)$ is a standard complex. Moreover, it coincides with $D(t)$ out of a small neighborhood of the bifurcation times.

Finally, we use Lemma \ref{ev} to see that the barcode of $D(t)$ is $\mathcal{B}(C_1(t))$.
\end{proof}

\section*{Acknowledgment}
\addcontentsline{toc}{section}{Acknowledgment}

This article has been written during and quickly after an internship in Uppsala, where I have been working for 5 months under the supervision of Georgios Dimitroglou Rizell. Mr Dimitroglou had me introduced to symplectic geometry and Floer theory, and provided me with with a conjecture that I managed to prove and present in this paper. During the whole process, he helped me get an intuitive understanding of the topic as well as write rigorous definitions, statements and proofs. 

Therefore, I want to express my deep gratitude to Mr Dimitroglou; it has been a real pleasure to work together until the end of this project.

\printbibliography

\end{document}